\renewcommand{\S}{\mathcal{S}}
\newcommand{\MS}{\mathcal{MS}}
\newcommand{\I}{\mathrm{I}}
\renewcommand{\P}{\mathcal{P}}
\newcommand{\B}{\mathcal{B}}
\newcommand{\R}{\mathbb{R}}
\newcommand{\CW}{{\mathrm{CW}}}
\newcommand{\simplex}{\bm{\Delta}}
\newcommand{\chains}{\mathrm{C}_\bullet}
\newcommand{\tensor}{\otimes}
\newcommand{\Hom}{\mathrm{Hom}}
\newcommand{\End}{\mathrm{End}}
\newcommand{\x}{\mathbf{x}}
\renewcommand{\1}{\mathbf{1}}
\newtheorem{theorem}{theorem}
\newtheorem{proposition}[theorem]{Proposition}
\newtheorem{lemma}[theorem]{Lemma}
\newtheorem{corollary}[theorem]{Corollary}
\theoremstyle{definition}
\newtheorem{definition}[theorem]{Definition}
\newtheorem{remark}[theorem]{Remark}
\newtheorem{notation}[theorem]{Notation}
\newtheorem{construction}[theorem]{Construction}
\newcommand*{\eqdef}{\mathrel{\rlap{%
			\raisebox{0.3ex}{$\m@th\cdot$}}%
		\raisebox{-0.3ex}{$\m@th\cdot$}}%
	=}
\tikzset{myptr/.style={decoration={markings,mark=at position 1 with %
			{\arrow[scale=1.5,>=stealth]{>}}},postaction={decorate}}}
\newsavebox\preproduct
\newcommand{\product}{\usebox\preproduct}
\newsavebox\prenakedproduct
\newcommand{\nakedproduct}{\usebox\prenakedproduct}
\newsavebox\precoproduct
\newcommand{\coproduct}{\usebox\precoproduct}
\newsavebox\precoproductcounit
\newsavebox\preboundary
\newcommand{\boundary}{
	\usebox\preboundary}
\newsavebox\precoboundary
\newsavebox\precounit
\newcommand{\counit}{
	\usebox\precounit}
\newsavebox\preidentity
\newcommand{\identity}{
	\usebox\preidentity}
\newsavebox\preunit
\newsavebox\preassociativity
\newsavebox\precoassociativity
\newsavebox\preinvolution
\newcommand{\involution}{
	\usebox\preinvolution}
\newsavebox\preleftcounitality
\newcommand{\leftcounitality}{
	\usebox\preleftcounitality}
\newsavebox\prerightcounitality
\newcommand{\rightcounitality}{
	\usebox\prerightcounitality}
\newsavebox\preleftunitality
\newsavebox\prerightunitality
\newsavebox\preproductcounit
\newcommand{\productcounit}{
	\usebox\preproductcounit}
\newsavebox\preunitcoproduct
\newsavebox\preleibniz
\newsavebox\prebialgebra
\newsavebox\precommutativity
\newsavebox\preleftcounithomotopy
\newcommand{\leftcounithomotopy}{
	\usebox\preleftcounithomotopy}
\newsavebox\prenakedleftcounithomotopy
\newcommand{\nakedleftcounithomotopy}{
	\usebox\prenakedleftcounithomotopy}
\begin{document}

\title{A finitely presented $E_\infty$-prop II: cellular context}
\author{Anibal M. Medina-Mardones}
\address{Department of Mathematics, University of Notre Dame, Notre Dame, IN 46556, United States}
\address{Max Planck Institute for Mathematics, Bonn, 53111, Germany}
\email{amedinam@nd.edu}
\thanks{Partially supported by Innosuisse grant 32875.1 IP-ICT - 1.}
\subjclass[2010]{55U10, 18C10, 18G55} 
\keywords{Operads and props, $E_\infty$-structures, diagonal approximations} 

\begin{abstract}
	We construct, using finitely many generating cell and relations, props in the category of CW-complexes with the property that their associated operads are models for the $E_\infty$-operad. We use one of these to construct a cellular $E_\infty$-bialgebra structure on the interval and derive from it a natural cellular $E_\infty$-coalgebra structure on the geometric realization of a simplicial set which, passing to cellular chains, recovers up to signs the Barratt-Eccles and Surjection coalgebra structures introduced by Berger-Fresse and McClure-Smith. We use another prop, a quotient of the first, to relate our constructions to earlier work of Kaufmann and prove a conjecture of his. This is the second of two papers in a series, the first investigates analogue constructions in the category of chain complexes.
\end{abstract}

\maketitle

\section{Introduction}

This is the second of two papers that, with the exception of section 5.3., can be read independently. In the first \cite{medina2020prop1}, we work over the category of differential graded modules. In this one, we do so over the category of \mbox{$\CW$-complexes} and \mbox{$\CW$-maps}. 

A purposeful construction of model for the $E_\infty$-operad is central in most contexts where commutativity up to coherent homotopies plays a role. No model for the $E_\infty$-operad can be described in terms of finitely many generating cells and relations. However, as demonstrated in this work, passing to a more general setting with multiple inputs and outputs allows to finitely present props whose associated operad is a model for the $E_\infty$-operad. 

We introduce three such props related by quotient morphisms
\begin{equation*}
\tilde{\S} \to \S \to \MS.
\end{equation*}
The cellular chains on the props $\S$ and $\MS$ are isomorphic to the algebraic props introduced in \cite{medina2020prop1}, in particular, the cellular chains of the operad associated to $\MS$ are isomorphic up to signs to the Surjection operad of \cite{mcclure2003multivariable, berger2004combinatorial}. We use $\tilde{\S}$ to provide the interval with an $E_\infty$-bialgebra structure and derive from it a natural $E_\infty$-coalgebra structure on the geometric realization of a simplicial set extending a cellular approximation to the diagonal. We describe how the natural Barratt-Eccles and Surjection coalgebra structures defined in \cite{berger2004combinatorial} and \cite{mcclure2003multivariable} on the normalized chains of simplicial sets are deduced, up to signs, from the $E_\infty$-bialgebra structure on the interval introduced here. Additionally, we prove that $\MS$ is isomorphic to an Arc Surface prop \cite{kaufmann03arc} whose associated operad, introduced by Kaufmann in \cite{kaufmann09dimension}, was conjectured in section 4.4. loc. cit. to have cellular chains isomorphic to the Surjection operad.

Given the concise graphical language it provides, the combinatorial formulation we present in this paper is of independent interest. In a string topology interpretation \cite{tradler07string, kaufmann08frobenious}, we note that the isomorphism of our prop $\MS$ and Kaufmann's stabilized Arc Surface prop induces an action of the cellular chains of $\MS$ on the Hochschild cochains of a normalized semi-simple Frobenius algebra, see Theorem 8 in \cite{kaufmann2008noncommutative}. The coproduct and product in our presentation correspond, via the input-output duality, to the Chas-Sullivan product \cite{chas1999string} and Goresky-Hingston coproduct \cite{goresky09loop} in the formalism of \cite{kaufmann2018detailed}.

We present an overview of the content of this article. In the second section, we review the material on operads and props needed for the rest of the paper, in particular we define the notion of finite presentation of a cellular prop. In the third section, we finitely present the prop $\tilde{\S}$ and compute its homotopy type. In the fourth section, we construct a natural $E_\infty$-coalgebra structure on the geometric realization of simplicial sets from an $\tilde{\S}$-bialgebra structure on the interval. In the final section, we introduce the props $\S$ and $\MS$ and study their relationships to Arc Surface props and the Surjection operad.

\subsection*{Acknowledgments} 

We would like to thank Dennis Sullivan, Bruno Vallette, Ralph Cohen, Stephan Stolz, Ralph Kaufmann, Kathryn Hess, Manuel Rivera and the anonymous referee for their insights, questions, and comments about this project. 

\section{Preliminaries}

We work in the symmetric monoidal category $(\CW,\times,\1)$ of CW-complexes and CW-maps. We denote the interval $[0,1]$ endowed with its usual CW-structure by $\I$.

\subsection{$E_\infty$-operads and $E_\infty$-props}

We say that an operad $\mathcal{O}$ is $\Sigma$\textbf{-free} if the action of $\Sigma_m$ on $\mathcal{O}(m)$ is free for every $m$. A $\Sigma$-\textbf{free resolution} of an operad $\mathcal{O}$ is an operad morphism from a $\Sigma$-free operad to $\mathcal{O}$ inducing a homotopy equivalence in each arity $m$. 

For any $X \in \CW$, there are two types of representations of an operad $\mathcal{O}$ on $X$. They are referred to as $\mathcal{O}$-\textbf{coalgebra} and $\mathcal{O}$-\textbf{algebra} structures and are respectively given by collections of CW-maps
\begin{equation*}
\{\mathcal{O}(m) \times X \to X^m\}_{m \geq 0} \text{ \ and \ } \{\mathcal{O}(m) \times X^m \to X\}_{m \geq 0}
\end{equation*}	
satisfying associativity, equivariance, and unitality relations.

The terminal operad $\1 = \{\1\}_{m\geq0}$ is of particular importance. Its (co)algebras define usual (co)commutative, (co)associative, and (co)unital (co)algebra structures. 

Following May \cite{may2006geometry}, an operad $\mathcal{O}$ is called an $E_\infty$-\textbf{operad} if it is a $\Sigma$-free resolution of the terminal operad and $\mathcal{O}(0)=\1$.

A \textbf{prop} is a strict symmetric monoidal category $\P = (\P, \odot, 0)$ enriched in $\CW$ generated by a single object. For any prop $\P$ with generator $p$ denote the CW-complex $\Hom_\P(p^{\odot n}, p^{\odot m})$ by $\P(n,m)$. The symmetry of the monoidal structure induces commuting right and left actions of $\Sigma_n$ and $\Sigma_m$ on $\P(n,m)$. Therefore, we think of the data of a prop as a $\Sigma$-\textbf{biobject}, i.e., a collection $\P = \big\{\P(n,m)\big\}_{n,m\geq0}$ of CW-complexes with commuting actions of $\Sigma_n$ and $\Sigma_m$, together with three types of maps
\begin{align*}
\circ_h : \P(n_1,m_1) \times \cdots \times \P(n_s,m_s) &\to \P(n_1+\cdots+n_s, m_1+\cdots+m_s), \\
\circ_v : \P(n,k) \times \P(k,m) &\to \P(n,m), \\
\eta : \1 &\to \P(n,n).
\end{align*}
These types of maps are referred to respectively as \textbf{horizontal compositions}, \textbf{vertical compositions}, and \textbf{units}. They are derived respectively from the monoidal product, the categorical composition, and the identity morphisms of $\P$. 

For any $\CW$-complex $X$ there are two types of representations of a prop $\P$ on $X$. They are referred to as $\P$-\textbf{bialgebra} and \textbf{opposite} $\P$-\textbf{bialgebra} structures and are respectively given by collections of CW-maps 
\begin{equation*}
\big\{ \P(n,m) \times X^n \to X^m \big\}_{n,m \geq 0} \text{\ \ and \ } \big\{ \P(n,m) \times X^m \to X^n \big\}_{n,m \geq 0.}
\end{equation*}
satisfying associativity, equivariance, and unitality relations.

Let $U$ be the functor from the category of props to that of operads given by naturally inducing from a prop $\P$ an operad structure on the $\Sigma$-module $U(\P)=\{\P(1,m)\}_{m\geq0}$. Notice that a $\P$-bialgebra (resp. opposite $\P$-algebra) structure on $X$ induces a $U(\P)$-coalgebra (resp. $U(\P)$-algebra) structure on $X$.

Following Boardman and Vogt \cite{boardman2006homotopy}, a prop $\P$ is called an $E_\infty$-\textbf{prop} if $U(\P)$ is an $E_\infty$-operad.

\subsection{Free props and presentations}	As described for example in \cite{fresse2010props}, the \textbf{free prop} $F(\B)$ generated by a \mbox{$\Sigma$-bimodule} $\B$ is constructed using open directed graphs with no directed loops that are enriched with a labeling described next. We think of each directed edge as built from two compatibly directed half-edges. For each vertex $v$ of a directed graph $G$, we have the sets $in(v)$ and $out(v)$ of half-edges that are respectively incoming to and outgoing from $v$. Half-edges that do not belong to $in(v)$ or $out(v)$ for any $v$ are divided into the disjoint sets $in(G)$ and $out(G)$ of incoming and outgoing external half-edges. For any positive integer $n$, let $\overline{n} = \{1,\dots,n\}$ and $\overline{0} = \emptyset$. For any finite set $S$, denote the cardinality of $S$ by $|S|$. The labeling is given by bijections  
\begin{equation*}
\overline{|in(G)|}\to in(G)\hspace*{1cm}\overline{|out(G)|}\to out(G)
\end{equation*}
and
\begin{equation*}
\overline{|in(v)|}\to in(v)\hspace*{1cm}\overline{|out(v)|}\to out(v)
\end{equation*}
for every vertex $v$. We refer to the isomorphism classes of such labeled directed graphs with no directed loops as $(n,m)$\textbf{-graphs}. We consider the right action of $\Sigma_n$ and the left action of $\Sigma_m$ on a $(n,m)$-graph given respectively by permuting the labels of $in(G)$ and $out(G)$. 

The free prop $F(\B)$ is given by all $(n,m)$-graphs which are $\B$-decorated in the following way. To every vertex $v$ of one such $G$, one assigns an element $p \in \B(|in(v)|, |out(v)|)$ and introduces the equivalence relations:  
\begin{center}
	\boxed{
		\begin{tikzpicture}[scale=.6]
		\node at (0,2.1) {$1$}; \draw[->] (0,1.7) -- (0,1);
		\node at (.9,2.1) {$\dots\ $};
		\node at (2,2.1) {$\scriptstyle |in(v)|$}; \draw[->] (2,1.7) -- (2,1);
		
		\draw (2.4,.5) arc (0:360:1.4cm and .5cm); \node at (1, .45) {$\tau^{-1} p\,\sigma^{-1}$};
		
		\node at (0,-1.1) {$1$}; \draw[<-](0,-.7) -- (0,0);
		\node at (.8,-1.1) {$\dots$};
		\node at (2,-1.1) {$\scriptstyle|out(v)|$}; \draw[<-] (2,-.7) -- (2,0);
		
		\node at (3.5,.5) {$\sim$}; 
		\end{tikzpicture}\; 
		\begin{tikzpicture}[scale=.6]
		\node at (0,2.1) {$\scriptstyle \sigma(1)$}; \draw[->] (0,1.7) -- (0,.9);
		\node at (1,2.1) {$\dots$};
		\node at (2.5,2.1) {$\scriptstyle\sigma({|in(v)|})$}; \draw[->] (2,1.7) -- (2,.9);
		
		\draw (2.4,.5) arc (0:360:1.4cm and .5cm); \node at (1, .4) {$p$};
		
		\node at (0,-1.1) {$\scriptstyle \tau(1)$}; \draw[<-](0,-.7) -- (0,0);
		\node at (1,-1.1) {$\dots$};
		\node at (2.5,-1.1) {$\scriptstyle\tau(|out(v)|)$}; \draw[<-] (2,-.7) -- (2,0);
		\end{tikzpicture}
	} 
	\qquad
	\boxed{
		\begin{tikzpicture}[scale=.6]
		\node at (0,2.1) {$1$}; \draw[->] (0,1.7) -- (0,1);
		\node at (.8,2.1) {$\dots$};
		\node at (2.2,2.1) {$\scriptstyle\tau(|in(v)|)$}; \draw[->] (2,1.7) -- (2,1);
		
		\draw (2.4,.5) arc (0:360:1.4cm and .5cm); \node at (1, .45) {$\eta(\1)$};
		
		\node at (0,-1.1) {$1$}; \draw[<-](0,-.7) -- (0,0);
		\node at (.8,-1.1) {$\dots$};
		\node at (2.2,-1.1) {$\scriptstyle\tau(|out(v)|)$}; \draw[<-] (2,-.7) -- (2,0);
		
		\node at (3.5,.5) {$\sim$}; 
		\end{tikzpicture}\ \ 
		\begin{tikzpicture}[scale=.6]
		\node at (0,2.1) {$1$}; \draw[->] (0,1.7) -- (0,-.7);
		\node at (2.3,2.1) {$\scriptstyle\tau(|in(v)|)$}; \draw[->] (2,1.7) -- (2,-.7);
		
		\node at (.8,2.1) {$\dots$};
		\node at (1,.5) {$\dots$};
		\node at (.8,-1.1) {$\dots$};
		
		\node at (0,-1.1) {$1$};
		\node at (2.3,-1.1) {$\scriptstyle \tau(|out(v)|)$};
		\end{tikzpicture}
	}
\end{center}
where $p \in \B(n,m)$, $\sigma\in\Sigma_n$, and $\tau\in\Sigma_m$.

For any $\Sigma$-bimodule $\B$, the above construction defines the free prop $F(\B)$ associated to $\B$. It satisfies the following universal property: Let $\iota : \B \to F(\B)$ be the morphism sending an element $p \in \B(n,m)$ to the labeled and decorated $(n,m)$-corolla
\begin{center}
	\boxed{
		\begin{tikzpicture}[scale=.6]
		\node at (0,2.1) {$1$}; \draw[->] (0,1.7) -- (0,1);
		\node at (1,2.1) {$\dots$};
		\node at (2,2.1) {$n$}; \draw[->] (2,1.7) -- (2,1);
		
		\draw (2.4,.5) arc (0:360:1.4cm and .5cm); \node at (1, .45) {$p$};
		
		\node at (0,-1.1) {$1$}; \draw[<-](0,-.7) -- (0,0);
		\node at (1,-1.1) {$\dots$};
		\node at (2,-1.1) {$m$}; \draw[<-] (2,-.7) -- (2,0);
		
		\end{tikzpicture}}
\end{center}
For any $\Sigma$-bimodule map $\phi : \B \to \P$ where $\P$ is a prop, there exists a unique prop morphism 
\begin{equation*}
F(\phi) : F(\B) \to \P
\end{equation*} 
such that 
\begin{equation*}
\phi = F(\phi) \circ \iota.
\end{equation*}
Furthermore, there is a canonical isomorphism $F(F(\B)) \to F(\B)$ given by regarding graphs containing graphs as graphs.

Given any bisequence of spaces $\big\{ B(n,m) \big\}_{n,m \geq 0}$ the free $\Sigma$-bimodule $B^\Sigma$ is defined by 
\begin{equation*}
B^\Sigma(n,m) = \Sigma_m \times B(n,m) \times \Sigma_n
\end{equation*}
and satisfies the following universal property: Let $\xi : B \to B^\Sigma$ be the bisequence map that crosses with the identity elements in the corresponding symmetric groups. For any bisequence map $\phi : B \to \B$, there exists a unique $\Sigma$-bimodule map $\phi^\Sigma : B^\Sigma \to \B$ such that $\phi = \phi^\Sigma \circ \xi$.

We will now describe what is meant by a \textbf{presentation} $(G,\Phi, R)$ of a prop. 

The first piece of data is a collection $G=\{G_d\}$ of bisequences with each $G_d(n,m)$ a disjoint union of spaces isomorphic to $\I^d$. Each such space is called a \textbf{generating $d$-cell in biarity} $(n,m)$. We denote the bisequence containing their boundaries by $\partial G_d$ and notice that $(\partial G_d)^\Sigma = \partial G_d^\Sigma$. 

The second piece of data $\Phi$ are the \textbf{generating attaching maps}. These are morphisms of $\Sigma$-bimodules
\begin{equation*}
\varphi_d: \partial G_d^\Sigma \to F(G^\Sigma_{d-1}).
\end{equation*}
Let $X_0$ be equal to $F(G^\Sigma_0)$ and for $d > 0$ let $X_d$ be equal to the pushout
\begin{center}
	\begin{tikzcd}
	F(\partial G_d^\Sigma) \arrow[d, >->] \arrow[r, "F(\varphi_d)\ "] &
	F(F(G_{d-1}^\Sigma)) \cong F(G_{d-1}^\Sigma) \arrow[r] & 
	X_{d-1} \arrow[d] \\
	F(G_d^\Sigma) \arrow[rr] & & X_d
	\end{tikzcd}
\end{center}
The limit of this process $X$ is endowed with the induced prop structure.

The third piece of data is a bisequence $R$ of subcomplexes of $X$ called the \textbf{relations}. Denote by $\langle R \rangle$ the smallest sub-$\Sigma$-bimodule in $X$ containing $R$ and closed under compositions. We say that the triple $(G,\Phi,R)$ is a presentation of the prop $X/\langle R\rangle$.

\subsection{Immersion convention} Graphs immersed in the plane will be used to represent labeled directed graphs with no directed loops, the convention we will follow is that the direction is given from top to bottom and the labeling from left to right. For example,

\begin{center}
	\boxed{
		\begin{tikzpicture}[scale=.7]
		\draw (1,3.7) to (1,3); 
		
		\draw (1,3) to [out=205, in=90] (0,0);
		
		\draw [shorten >= 0cm] (.6,2.73) to [out=-100, in=90] (2,0);
		
		\draw [shorten >= .15cm] (1,3) to [out=-25, in=30, distance=1.1cm] (1,1.5);
		\draw [shorten <= .1cm] (1,1.5) to [out=210, in=20] (0,1);
		
		\node at (1,3.9){};
		\node at (0,-.32){};
		\node at (2,-.32){};
		
		\node at (3,1.5){$\sim$\ \ \ };
		\end{tikzpicture}
		\begin{tikzpicture}[scale=.7]
		\draw (1,3.7) to (1,3); 
		
		\draw [->](1,3) to [out=205, in=90] (0,0);
		
		\draw [shorten >= 0cm,->] (.6,2.73) to [out=-100, in=90] (2,0);
		
		\draw [shorten >= .15cm] (1,3) to [out=-25, in=30, distance=1.1cm] (1,1.5);
		\draw [shorten <= .1cm] (1,1.5) to [out=210, in=20] (0,1);
		
		\node at (1,3.9){$\scriptstyle 1$};
		
		\node at (.7,3){$\scriptstyle 1$};
		\node at (1.35,3){$\scriptstyle 2$};
		
		\node at (.1,2.3){$\scriptstyle 1$};
		\node at (.8,2.3){$\scriptstyle 2$};
		
		\node at (-.15,1.3){$\scriptstyle 1$};
		\node at (.3,1.3){$\scriptstyle 2$};
		
		\node at (0,-.3){$\scriptstyle 1$};
		\node at (2,-.3){$\scriptstyle 2$};
		\end{tikzpicture}
	}
\end{center}

\section{The prop $\tilde{\S}$ }
In this section we define the prop $\tilde{\S}$ via a finite presentation and show it is an $E_\infty$-prop.

\begin{definition}\label{Prop S}
	Let $\tilde{\S}$ be the prop generated by 
	\begin{equation*}
	\counit \in \tilde{\S}_0(1,0) \quad \coproduct \in \tilde{\S}_0(1,2) \quad \product \in \tilde{\S}_1(2,1) \quad \leftcounithomotopy \in \tilde{\S}_1(1,1)
	\end{equation*} 
	with generating attaching maps
	\begin{equation*}
	\begin{tikzpicture}[scale=.25]
	\draw (0,0)--(0,-.7);
	\draw (0,0)--(-.5,.5) node[above, scale = .5]{$\scriptstyle 1$};
	\draw (0,0)--(.5,.5) node[above, scale = .5]{$\scriptstyle 0$};)
	\end{tikzpicture}
	=\
	\begin{tikzpicture}[scale=.25]
	\draw (0,0)--(0,1.3);
	\draw [fill] (.7,.1) circle [radius=0.1];
	\draw (.7,0)--(.7,1.3);
	\end{tikzpicture}
	\qquad
	\begin{tikzpicture}[scale=.25]
	\draw (0,0)--(0,-.7);
	\draw (0,0)--(.5,.5) node[above, scale = .5]{$\scriptstyle 1$};
	\draw (0,0)--(-.5,.5) node[above, scale = .5]{$\scriptstyle 0$};)
	\end{tikzpicture}
	=\
	\begin{tikzpicture}[scale=.25]
	\draw (0,0)--(0,1.3);
	\draw [fill] (0,.1) circle [radius=0.1];
	\draw (.7,0)--(.7,1.3);
	\end{tikzpicture}
	\qquad \text{and} \qquad
	\begin{tikzpicture}[scale=.25]
	\draw (0,0)--(0,1.4);
	\draw [fill=white] (0,.7) circle [radius=0.13];
	\node at (.25,1.15)[scale = .5]{$\ 1$};
	\end{tikzpicture}
	=\
	\begin{tikzpicture}[scale=.25]
	\draw (0,0)--(0,.8);
	\draw (0,0)--(.5,-.5);
	\draw (0,0)--(-.5,-.5);
	\draw [fill] (-.5,-.5) circle [radius=0.1];
	\end{tikzpicture}
	\qquad
	\begin{tikzpicture}[scale=.25]
	\draw (0,0)--(0,1.4);
	\draw [fill=white] (0,.7) circle [radius=0.13];
	\node at (.25,1.15)[scale = .5]{$\ 0$};
	\end{tikzpicture}
	=\
	\begin{tikzpicture}[scale=.25]
	\draw (0,-.5)--(0,.9);
	\end{tikzpicture}
	\end{equation*}
	and restricted by the relations 
	\begin{equation*}
	\begin{tikzpicture}[scale=.25]
	\draw (0,0)--(0,.8);
	\draw (0,0)--(.5,-.5);
	\draw (0,0)--(-.5,-.5);
	\draw [fill] (-.5,-.5) circle [radius=0.1];
	\draw [fill] (.5,-.5) circle [radius=0.1];
	\end{tikzpicture}
	=
	\begin{tikzpicture}[scale=.25]
	\draw (0,0)--(0,1.3);
	\draw [fill] (0,0) circle [radius=0.1];
	\end{tikzpicture}
	\qquad
	\begin{tikzpicture}[scale=.25]
	\draw (0,0)--(0,-.7);
	\draw (0,0)--(.5,.5) node[above, scale = .5]{$s$};
	\draw (0,0)--(-.5,.5) node[above, scale = .5]{$\scriptstyle 1$-$s$};)
	\draw [fill] (0,-.65) circle [radius=0.1];
	\end{tikzpicture}
	\!=\
	\begin{tikzpicture}[scale=.25]
	\draw (0,0)--(0,1.3);
	\draw [fill] (0,0) circle [radius=0.1];
	\draw (.7,0)--(.7,1.3);
	\draw [fill] (.7,0) circle [radius=0.1];
	\end{tikzpicture}
	\qquad\ \ 
	\begin{tikzpicture}[scale=.25]
	\draw (0,0)--(0,1.4);
	\draw [fill=white] (0,.7) circle [radius=0.13];
	\node at (.25,1.15)[scale = .5]{$s$};
	\draw [fill] (0,0) circle [radius=0.1];
	\end{tikzpicture}
	=\,
	\begin{tikzpicture}[scale=.25]
	\draw (0,0)--(0,1.3);
	\draw [fill] (0,0) circle [radius=0.1];
	\end{tikzpicture}
	\ .
	\end{equation*}
\end{definition}

Recall that $\1$ stands for the terminal CW-complex, i.e., a single 0-cell.
\begin{lemma} \label{lemma: homotopy type of S tilde}
	Let 
	\begin{equation*}
	\overline{\1}(n,m) =
	\begin{cases} 
	\quad \1 & \text{ if } n>0 \\
	\quad \emptyset & \text{ if } n=0
	\end{cases}	
	\end{equation*}
	endowed with the trivial prop structure. The unique map $\tilde{\S}\to\overline{\1}$ is a homotopy equivalence.
\end{lemma}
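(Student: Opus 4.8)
The plan is to prove the statement arity by arity, showing that $\tilde{\S}(n,m)$ is contractible whenever $n>0$ and empty whenever $n=0$, so that the unique map to $\overline{\1}$ is an arity-wise homotopy equivalence. The vanishing for $n=0$ is purely combinatorial: each of the four generators has at least one input, and neither horizontal nor vertical composition can produce a decorated graph with no incoming external edges, since following inputs upward in a finite loop-free graph one always reaches an unfed vertex unless the graph is empty. Hence $\tilde{\S}(0,m)=\emptyset$ for $m>0$, matching $\overline{\1}$, and the content of the lemma is the contractibility of $\tilde{\S}(n,m)$ for $n>0$.

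For this I would first make the CW-structure explicit. By the presentation a cell of $\tilde{\S}(n,m)$ is represented by an $(n,m)$-graph whose vertices are decorated by the generators, and its dimension equals the number of decorations by the two one-dimensional generators $\product$ and $\leftcounithomotopy$; tensoring these intervals realizes each top cell as a cube. Using the attaching maps and relations I would then reduce to a normal form: the two boundary specifications of $\product$ identify its endpoints with the two projections $\id\otimes\counit$ and $\counit\otimes\id$, the generator $\leftcounithomotopy$ supplies a one-cell from $\id$ to $(\counit\otimes\id)\Delta$, and the relations $\counit\circ\product=\counit\otimes\counit$ and $\counit\circ\leftcounithomotopy=\counit$ collapse every cell whose output is immediately counited.

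The heart of the argument is then an explicit deformation retraction of $\tilde{\S}(n,m)$ onto a single $0$-cell, with the geometric moves supplied by the one-cells: $\leftcounithomotopy$ slides a bare strand to $(\counit\otimes\id)\Delta$, while $\product\circ\coproduct$ interpolates between the two counit--coproduct projections $(\id\otimes\counit)\Delta$ and $(\counit\otimes\id)\Delta$. Iterating these at each output lets one flow an arbitrary decorated graph toward a fixed standard representative, and the relations guarantee that the flow is compatible with the cell attachments; equivalently, one filters $\tilde{\S}(n,m)$ by number of vertices and checks that each stage is obtained from the previous by attaching cells along null-homotopic maps, the null-homotopies being exactly these two one-cells. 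I expect the main obstacle to be coherence: because the coproduct is not required to be coassociative there are many $0$-cells, namely the distinct bracketings of iterated coproducts with counited leaves, and one must verify that the local cancellations assemble into a single globally defined, $\Sigma_n\times\Sigma_m$-equivariant deformation retraction respecting $\langle R\rangle$, rather than merely establishing path-connectivity and the vanishing of individual homotopy or homology groups.
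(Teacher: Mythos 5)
Your treatment of the $n=0$ case and your identification of the relevant one-cells ($\leftcounithomotopy$ and the boundary behaviour of $\product$) match the ingredients the paper uses, but your overall strategy has two genuine problems. First, the step you defer --- assembling the local moves into a single globally defined, equivariant deformation retraction compatible with $\langle R\rangle$ --- is not a technical afterthought; it is the entire content of the lemma, and nothing in your proposal indicates how to carry it out. You correctly observe that there are many $0$-cells (distinct bracketings of iterated coproducts with counited leaves, with no coassociativity to identify them), which is precisely why a one-shot retraction onto ``a fixed standard representative'' is hard to write down coherently. Second, the criterion you propose for the filtration argument is backwards: if each stage is obtained from the previous one by attaching cells along \emph{null-homotopic} maps, then each stage is homotopy equivalent to the previous stage wedged with spheres, which would make the spaces \emph{less} contractible, not more. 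What you would actually need is that the new cells cancel in pairs (an elementary-collapse or discrete Morse argument), and setting that up here is no easier than the coherence problem itself.

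The paper avoids all of this by inducting on the number of outputs rather than retracting each space directly. It defines $i:\tilde{\S}(n,m)\to\tilde{\S}(n,m+1)$ by grafting a coproduct onto the first input to create a new first output, and $r:\tilde{\S}(n,m+1)\to\tilde{\S}(n,m)$ by capping the first output with $\counit$. Both composites $r\circ i$ and $i\circ r$ differ from the identity by a single application of $\leftcounithomotopy$ (together with the attaching maps of $\product$), so the required homotopies are themselves compositions in the prop --- automatically continuous, equivariant, and compatible with the relations, with no coherence to check. This reduces everything to showing $\tilde{\S}(n,0)$ is a point, which follows immediately from the imposed relations. If you want to salvage your approach, the cleanest fix is to adopt this reduction; otherwise you must actually construct the global retraction you describe, and as stated your plan does not do so.
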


\begin{proof} 			
	For $n=0$, we notice that $\tilde{\S}(0,m) = \emptyset = \overline{\1}(0,m)$. For $n>0$ and $m\geq0$ we start by showing that the CW-complexes $\tilde{\S}(n,m)$ and $\tilde{\S}(n,m+1)$ are homotopy equivalent. Consider the collection of maps $\{i:\tilde{\S}(n,m)\to\tilde{\S}(n,m+1)\}$ described by the following diagram\\
	\begin{equation*}
	\boxed{\begin{tikzpicture}[scale=.6]
		\node at (0,2) {$1$}; \draw (0,1.7) -- (0,1);
		\node at (1,2) {$\dots$};
		\node at (2,2) {$n$}; \draw (2,1.7) -- (2,.9);
		
		\draw [dashed] (0,0) rectangle (2,1); \node at (1, .5) {$G$};
		
		\node at (0,-1) {$1$}; \draw (0,-.7) -- (0,0);
		\node at (1,-1) {$\dots$};
		\node at (2,-1) {$m$}; \draw (2,-.7) -- (2,0);
		
		\node at (3,.75) {$i$}; \draw [|->] (2.5,.5) -- (3.5,.5);
		\end{tikzpicture}
		\begin{tikzpicture}[scale=.6]
		\node at (0,2) {$1$}; \draw (0,1.7) -- (0,1);
		\node at (1,2) {$\dots$};
		\node at (2,2) {$n$}; \draw (2,1.7) -- (2,.9);
		
		\draw [dashed] (0,0) rectangle (2,1); \node at (1, .5) {$G$};
		
		\node at (-.5,-1) {$1$}; \draw (-.5,-.7)  to[out=90,in=-140]  (0,1.4);
		\node at (0,-1) {$2$}; \draw (0,-.7) -- (0,0);
		\node at (.75,-1) {$\dots$};
		\node at (2,-1) {$m+1$}; \draw (2,-.7) -- (2,0);
		\end{tikzpicture}}
	\end{equation*}\\
	Consider also the collection of maps $\{r:\tilde{\S}(n,m+1)\to\tilde{\S}(n,m)\}$ described by\\
	\begin{equation*}
	\boxed{\begin{tikzpicture}[scale=.6]
		\node at (0,2) {$1$}; \draw (0,1.7) -- (0,1);
		\node at (1,2) {$\dots$};
		\node at (2,2) {$n$}; \draw (2,1.7) -- (2,.9);
		
		\draw [dashed] (0,0) rectangle (2,1); \node at (1, .5) {$G$};
		
		\draw (0,-.7) -- (0,0);
		\node at (0,-1) {$1$};
		\node at (.5,-1) {$2$}; \draw (.5,-.7) -- (.5,0);
		\node at (1,-1) {$\cdot\!\cdot\!\cdot$};
		\node at (2.1,-1) {$m+1$}; \draw (2,-.7) -- (2,0);
		
		\node at (3,.75) {$r$}; \draw [|->] (2.5,.5) -- (3.5,.5);
		\end{tikzpicture}
		\ 
		\begin{tikzpicture}[scale=.6]
		\node at (0,2) {$1$}; \draw (0,1.7) -- (0,1);
		\node at (1,2) {$\dots$};
		\node at (2,2) {$n$}; \draw (2,1.7) -- (2,.9);
		
		\draw [dashed] (0,0) rectangle (2,1); \node at (1, .5) {$G$};
		
		\node at(0,-.7) {$\bullet$}; \draw (0,-.7) -- (0,0);
		\node at (.5,-1) {$1$}; \draw (.5,-.7) -- (.5,0);
		\node at (1.25,-1) {$\dots$};
		\node at (2,-1) {$m$}; \draw (2,-.7) -- (2,0);
		\end{tikzpicture}}
	\end{equation*}\\
	The diagram below shows that $r\circ i$ is homotopic to the identity\\
	\begin{equation*}
	\boxed{\begin{tikzpicture}[scale=.6]
		\node at (0,2) {$1$}; \draw (0,1.7) -- (0,1);
		\node at (1,2) {$\dots$};
		\node at (2,2) {$n$}; \draw (2,1.7) -- (2,.9);
		
		\draw [dashed] (0,0) rectangle (2,1); \node at (1, .5) {$G$};
		
		\node at (0,-1) {$1$}; \draw (0,-.7) -- (0,0);
		\node at (1,-1) {$\dots$};
		\node at (2,-1) {$m$}; \draw (2,-.7) -- (2,0);
		
		\node at (3,.75) {$i$}; \draw [|->] (2.5,.5) -- (3.5,.5);
		\end{tikzpicture}
		\begin{tikzpicture}[scale=.6]
		\node at (0,2) {$1$}; \draw (0,1.7) -- (0,1);
		\node at (1,2) {$\dots$};
		\node at (2,2) {$n$}; \draw (2,1.7) -- (2,.9);
		
		\draw [dashed] (0,0) rectangle (2,1); \node at (1, .5) {$G$};
		
		\node at (-.5,-1) {$1$}; \draw (-.5,-.7)  to[out=90,in=-140]  (0,1.4);
		\node at (0,-1) {$2$}; \draw (0,-.7) -- (0,0);
		\node at (.75,-1) {$\dots$};
		\node at (2,-1) {$m+1$}; \draw (2,-.7) -- (2,0);
		
		\node at (3,.75) {$r$}; \draw [|->] (2.5,.5) -- (3.5,.5);
		\end{tikzpicture}
		\begin{tikzpicture}[scale=.6]
		\node at (0,2) {$1$}; \draw (0,1.7) -- (0,1);
		\node at (1,2) {$\dots$};
		\node at (2,2) {$n$}; \draw (2,1.7) -- (2,.9);
		
		\draw [dashed] (0,0) rectangle (2,1); \node at (1, .5) {$G$};
		
		\node at (-.5,-.7) {$\bullet$}; \draw (-.5,-.7)  to[out=90,in=-140]  (0,1.4);
		\node at (0,-1) {$1$}; \draw (0,-.7) -- (0,0);
		\node at (1,-1) {$\dots$};
		\node at (2,-1) {$m$}; \draw (2,-.7) -- (2,0);
		\node at (2.7,.5) {$=$}; 
		\end{tikzpicture}
		\!
		\begin{tikzpicture}[scale=.6]
		\node at (0,2) {$1$}; \draw (0,1.7) -- (0,1);
		\draw [fill=white] (0,1.3) circle [radius=0.11];
		\node at (.25,1.4)[scale = .5]{$1$};
		\node at (1,2) {$\dots$};
		\node at (2,2) {$n$}; \draw (2,1.7) -- (2,.9);
		
		\draw [dashed] (0,0) rectangle (2,1); \node at (1, .5) {$G$};
		
		\node at (0,-1) {$1$}; \draw (0,-.7) -- (0,0);
		\node at (1,-1) {$\dots$};
		\node at (2,-1) {$m$}; \draw (2,-.7) -- (2,0);
		\node at (2.7,.5) {$\sim$}; 
		\end{tikzpicture}
		\begin{tikzpicture}[scale=.6]
		\node at (0,2) {$1$}; \draw (0,1.7) -- (0,1);
		\draw [fill=white] (0,1.3) circle [radius=0.11];
		\node at (.25,1.4)[scale = .5]{$0$};
		\node at (1,2) {$\dots$};
		\node at (2,2) {$n$}; \draw (2,1.7) -- (2,.9);
		
		\draw [dashed] (0,0) rectangle (2,1); \node at (1, .5) {$G$};
		
		\node at (0,-1) {$1$}; \draw (0,-.7) -- (0,0);
		\node at (1,-1) {$\dots$};
		\node at (2,-1) {$m$}; \draw (2,-.7) -- (2,0);
		\node at (2.7,.5) {$=$}; 
		\end{tikzpicture}
		\begin{tikzpicture}[scale=.6]
		\node at (0,2) {$1$}; \draw (0,1.7) -- (0,1);
		\node at (1,2) {$\dots$};
		\node at (2,2) {$n$}; \draw (2,1.7) -- (2,.9);
		
		\draw [dashed] (0,0) rectangle (2,1); \node at (1, .5) {$G$};
		
		\node at (0,-1) {$1$}; \draw (0,-.7) -- (0,0);
		\node at (1,-1) {$\dots$};
		\node at (2,-1) {$m$}; \draw (2,-.7) -- (2,0);
		\end{tikzpicture}
	}
	\end{equation*}\\
	Let us compute diagrammatically the composition $i\circ r$\\
	\begin{equation*}
	\boxed{\begin{tikzpicture}[scale=.6]
		\node at (0,2) {$1$}; \draw (0,1.7) -- (0,1);
		\node at (1,2) {$\dots$};
		\node at (2,2) {$n$}; \draw (2,1.7) -- (2,.9);
		
		\draw [dashed] (0,0) rectangle (2,1); \node at (1, .5) {$G$};
		
		\draw (0,-.7) -- (0,0);
		\node at (0,-1) {$1$};
		\node at (.5,-1) {$2$}; \draw (.5,-.7) -- (.5,0);
		\node at (1.25,-1) {$\dots$};
		\node at (2,-1) {$m$}; \draw (2,-.7) -- (2,0);
		
		\node at (3,.75) {$r$}; \draw [|->] (2.5,.5) -- (3.5,.5);
		\end{tikzpicture}
		\   
		\begin{tikzpicture}[scale=.6]
		\node at (0,2) {$1$}; \draw (0,1.7) -- (0,1);
		\node at (1,2) {$\dots$};
		\node at (2,2) {$n$}; \draw (2,1.7) -- (2,.9);
		
		\draw [dashed] (0,0) rectangle (2,1); \node at (1, .5) {$G$};
		
		\node at(0,-.7) {$\bullet$}; \draw (0,-.7) -- (0,0);
		\node at (.5,-1) {$1$}; \draw (.5,-.7) -- (.5,0);
		\node at (.9,-1) {$\cdot\!\cdot\!\cdot$};
		\node at (2,-1) {$m-1$}; \draw (2,-.7) -- (2,0);
		
		\node at (3,.75) {$i$}; \draw [|->] (2.5,.5) -- (3.5,.5);
		\end{tikzpicture}
		\begin{tikzpicture}[scale=.6]
		\node at (0,2) {$1$}; \draw (0,1.7) -- (0,1);
		\node at (1,2) {$\dots$};
		\node at (2,2) {$n$}; \draw (2,1.7) -- (2,.9);
		
		\draw [dashed] (0,0) rectangle (2,1); \node at (1, .5) {$G$};
		
		\node at (-.5,-1) {$1$}; \draw (-.5,-.7)  to[out=90,in=-140]  (0,1.4);
		\node at (0,-.7) {$\bullet$}; \draw (0,-.7) -- (0,0);
		\node at (.5,-1) {$2$}; \draw (.5,-.7) -- (.5,0);
		\node at (1.25,-1) {$\dots$};
		\node at (2,-1) {$m$}; \draw (2,-.7) -- (2,0);
		\end{tikzpicture}
	}
	\end{equation*}\\
	The composition $i \circ r$ is homotopic to the identity since
	\begin{equation*}
	\boxed{
		\begin{tikzpicture}[scale=.6]
		\node at (0,2) {$1$}; \draw (0,1.7) -- (0,1);
		\node at (1,2) {$\dots$};
		\node at (2,2) {$n$}; \draw (2,1.7) -- (2,.9);
		
		\draw [dashed] (0,0) rectangle (2,1); \node at (1, .5) {$G$};
		
		\node at (-.5,-1) {$1$}; \draw (-.5,-.7)  to[out=90,in=-140]  (0,1.4);
		\node at (0,-.7) {$\bullet$}; \draw (0,-.7) -- (0,0);
		\node at (.5,-1) {$2$}; \draw (.5,-.7) -- (.5,0);
		\node at (1.25,-1) {$\dots$};
		\node at (2,-1) {$m$}; \draw (2,-.7) -- (2,0);
		\node at (2.9,.5) {$=$}; 
		\end{tikzpicture}
		\begin{tikzpicture}[scale=.6]
		\node at (0,2) {$1$}; \draw (0,1.7) -- (0,1);
		\node at (1,2) {$\dots$};
		\node at (2,2) {$n$}; \draw (2,1.7) -- (2,.9);
		
		\draw [dashed] (0,0) rectangle (2,1); \node at (1, .5) {$G$};
		
		\draw (0,-.4)  to[out=140,in=-140]  (0,1.4);
		\node at (0,-1) {$1$}; \draw (0,-.7) -- (0,0);
		\node at (1,-1) {$\dots$};
		\node at (2,-1) {$m$}; \draw (2,-.7) -- (2,0);
		
		\node at (-.4,-.3){$\scriptscriptstyle{1}$};
		\node at (.3,-.3){$\scriptscriptstyle{0}$};
		
		\node at (2.7,.5) {$\sim$}; 
		\end{tikzpicture}
		\begin{tikzpicture}[scale=.6]
		\node at (0,2) {$1$}; \draw (0,1.7) -- (0,1);
		\node at (1,2) {$\dots$};
		\node at (2,2) {$n$}; \draw (2,1.7) -- (2,.9);
		
		\draw [dashed] (0,0) rectangle (2,1); \node at (1, .5) {$G$};
		
		\draw (0,-.4)  to[out=140,in=-140]  (0,1.4);
		\node at (0,-1) {$1$}; \draw (0,-.7) -- (0,0);
		\node at (1,-1) {$\dots$};
		\node at (2,-1) {$m$}; \draw (2,-.7) -- (2,0);
		
		\node at (-.4,-.3){$\scriptscriptstyle{0}$};
		\node at (.3,-.3){$\scriptscriptstyle{1}$};
		
		\node at (2.7,.5) {$=$}; 
		\end{tikzpicture}
		\begin{tikzpicture}[scale=.6]
		\node at (0,2) {$1$}; \draw (0,1.7) -- (0,1);
		\node at (1,2) {$\dots$};
		\node at (2,2) {$n$}; \draw (2,1.7) -- (2,.9);
		
		\draw [dashed] (0,0) rectangle (2,1); \node at (1, .5) {$G$};
		
		\node at (-.5,-.7) {$\bullet$}; \draw (-.5,-.7)  to[out=90,in=-140]  (0,1.4);
		\node at (0,-1) {$1$}; \draw (0,-.7) -- (0,0);
		\node at (1,-1) {$\dots$};
		\node at (2,-1) {$m$}; \draw (2,-.7) -- (2,0);
		\node at (2.7,.5) {$=$}; 
		\end{tikzpicture}
		\!
		\begin{tikzpicture}[scale=.6]
		\node at (0,2) {$1$}; \draw (0,1.7) -- (0,1);
		\draw [fill=white] (0,1.3) circle [radius=0.11];
		\node at (.25,1.4)[scale = .5]{$1$};
		\node at (1,2) {$\dots$};
		\node at (2,2) {$n$}; \draw (2,1.7) -- (2,.9);
		
		\draw [dashed] (0,0) rectangle (2,1); \node at (1, .5) {$G$};
		
		\node at (0,-1) {$1$}; \draw (0,-.7) -- (0,0);
		\node at (1,-1) {$\dots$};
		\node at (2,-1) {$m$}; \draw (2,-.7) -- (2,0);
		\node at (2.7,.5) {$\sim$}; 
		\end{tikzpicture}
		\begin{tikzpicture}[scale=.6]
		\node at (0,2) {$1$}; \draw (0,1.7) -- (0,1);
		\draw [fill=white] (0,1.3) circle [radius=0.11];
		\node at (.25,1.4)[scale = .5]{$0$};
		\node at (1,2) {$\dots$};
		\node at (2,2) {$n$}; \draw (2,1.7) -- (2,.9);
		
		\draw [dashed] (0,0) rectangle (2,1); \node at (1, .5) {$G$};
		
		\node at (0,-1) {$1$}; \draw (0,-.7) -- (0,0);
		\node at (1,-1) {$\dots$};
		\node at (2,-1) {$m$}; \draw (2,-.7) -- (2,0);
		\node at (2.7,.5) {$=$}; 
		\end{tikzpicture}
		\begin{tikzpicture}[scale=.6]
		\node at (0,2) {$1$}; \draw (0,1.7) -- (0,1);
		\node at (1,2) {$\dots$};
		\node at (2,2) {$n$}; \draw (2,1.7) -- (2,.9);
		
		\draw [dashed] (0,0) rectangle (2,1); \node at (1, .5) {$G$};
		
		\node at (0,-1) {$1$}; \draw (0,-.7) -- (0,0);
		\node at (1,-1) {$\dots$};
		\node at (2,-1) {$m$}; \draw (2,-.7) -- (2,0);
		\end{tikzpicture}
	}\end{equation*}\\
	These computations show that $i$ and $r$ are homotopy inverses, and the relations imposed on $\tilde{\S}$ imply that $\tilde{\S}(n,0)$ contains only the class of \vspace*{-7pt}
	\begin{center}
		\boxed{\begin{tikzpicture}[scale=.4]
			\node at (0,0){$\bullet$}; \draw (0,0) -- (0,2.5);
			\node at (1,0){$\bullet$}; \draw (1,0) -- (1,2.5);
			\node at (3,0){$\bullet$}; \draw (3,0) -- (3,2.5);
			\node at (2,1){$\cdots$}; 
			\end{tikzpicture}}
	\end{center}\vspace*{-7pt}
	We conclude that each $\tilde{\S}(n,m)$ is contractible for $n > 0$.
\end{proof}

\begin{theorem} \label{sigma-free resolution}
	The prop $\tilde{\S}$ is an $E_\infty$-prop.
\end{theorem}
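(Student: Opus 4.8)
The plan is to unwind the definition of an $E_\infty$-prop and reduce the statement to three conditions on the associated operad. By definition, $\tilde{\S}$ is an $E_\infty$-prop precisely when $U(\tilde{\S}) = \{\tilde{\S}(1,m)\}_{m\geq0}$ is an $E_\infty$-operad, and by the May-style definition recalled above this means verifying that (i) the $\Sigma_m$-action on each $\tilde{\S}(1,m)$ is free, (ii) the unique operad morphism $U(\tilde{\S})\to\1$ to the terminal operad induces a homotopy equivalence in each arity, and (iii) $U(\tilde{\S})(0)=\tilde{\S}(1,0)=\1$. I would dispatch (ii) and (iii) quickly and then concentrate on (i), which carries the real content.

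Conditions (ii) and (iii) follow directly from Lemma \ref{lemma: homotopy type of S tilde}. That lemma produces a homotopy equivalence $\tilde{\S}\to\overline{\1}$, and since $\overline{\1}(1,m)=\1$ for every $m\geq0$, each space $\tilde{\S}(1,m)$ is contractible; hence the unique morphism $U(\tilde{\S})\to\1$ is an arity-wise homotopy equivalence, giving (ii). For (iii), the proof of the lemma shows that $\tilde{\S}(n,0)$ reduces, under the imposed relations, to the single class of $n$ parallel counits, so $\tilde{\S}(1,0)$ consists of exactly one $0$-cell, namely $\counit$, and is therefore equal to $\1$.

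The heart of the argument is (i), the $\Sigma$-freeness. Since $\Sigma_m$ acts cellularly on $\tilde{\S}(1,m)$ by permuting the $m$ external output legs, I would show that no nonidentity $\sigma\in\Sigma_m$ fixes an open cell, which is equivalent to freeness of the action; the content is only for $m\geq2$. A cell is represented by a reduced decorated graph $G$ with a single input leg and $m$ output legs labeled $1,\dots,m$, its dimension being the number of product- and homotopy-generators it contains (each contributing an $s$-parameter). The action carries the cell of $G$ to the cell of $\sigma\cdot G$, so the task becomes: $\sigma\cdot G = G$ in $\tilde{\S}$ forces $\sigma=\id$. The decisive observation is that none of the three defining relations, nor any unitality or equivariance identity, imposes any form of (co)commutativity: the generating coproduct and product come with an unsymmetrized left/right ordering of their legs, and the relations are counitality-type identities that never interchange distinct output legs. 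I would therefore use the planar (immersion) structure together with the ordered outputs at each coproduct to equip the output legs of any reduced $G$ with well-defined combinatorial positions; a planar automorphism realizing a nontrivial $\sigma$ would have to interchange the ordered subdiagrams feeding some coproduct, contradicting its non-cocommutativity. Equivalently, and more cleanly, I would package these positions as a $\Sigma_m$-equivariant map from $\tilde{\S}(1,m)$ to a free $\Sigma_m$-CW-complex, and invoke the formal fact that an equivariant map to a free action has free source.

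The step I expect to be the main obstacle is exactly this last one: making precise that the output labels are always distinguished. The difficulty is that a general element of $\tilde{\S}(1,m)$ is assembled from products as well as coproducts and homotopies, hence is a directed acyclic graph rather than a mere tree, so "non-cocommutativity" must be shown to persist through compositions and, crucially, through the equivalence relation $\langle R\rangle$. Concretely, the hard part is producing a normal form for cells modulo the relations that carries a canonical linear order on its output legs and admits no nontrivial label-automorphism; once such a reduced representative is in hand, freeness of the $\Sigma_m$-action, and with it the theorem, follows formally.
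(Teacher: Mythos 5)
Your proposal is correct and follows essentially the same route as the paper: the paper likewise combines Lemma \ref{lemma: homotopy type of S tilde} (contractibility of each $\tilde{\S}(1,m)$, which also identifies $\tilde{\S}(1,0)$ with a point) with freeness of the $\Sigma_m$-action on $\tilde{\S}(1,m)$, which it asserts holds ``by construction.'' The normal-form difficulty you flag as the main obstacle is resolved more cheaply than you anticipate: the generating $\Sigma$-biobject is free ($G^\Sigma = \Sigma_m \times G \times \Sigma_n$, so the output labels are rigid data on every decorated graph, and no generator is cocommutative), while all three imposed relations live in biarity $(n,0)$ and therefore can never identify a cell of $\tilde{\S}(1,m)$ with a nontrivial output-relabeling of itself.
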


\begin{proof}
	Since by construction the action of $\Sigma_m$ on $U(\tilde{\S})(m)=\tilde{\S}(1,m)$ is free, the theorem follows from the previous lemma.
\end{proof}

\begin{remark}
	Notice that the operad obtained by restricting to $\{S(n,1)\}$ is not $\Sigma$-free. For example, 
	\begin{center}
		\boxed{\begin{tikzpicture}[scale=.5]
			\node at (0,0){$\bullet$}; \draw (0,0) -- (0,2); \node at (0,2.5){$1$};
			\node at (1,0){$\bullet$}; \draw (1,0) -- (1,2); \node at (1,2.5){$2$};
			\draw (2,-.05) -- (2,2); \node at (2,2.5){$3$};
			\end{tikzpicture}}
	\end{center}
	in $\tilde{\S}(3,1)$ is fixed by the transposition $(1,2)$.
\end{remark}

\section{Cellular $E_\infty$-coalgebra on simplicial sets}

In this section we derive from an $\tilde{\S}$-bialgebra structure on the interval a natural \mbox{$U(\tilde{\S})$-coalgebra} structures on the geometric realization of simplicial sets.

\begin{definition} \label{definition: simplex category}
	Let us denote the singleton $\{0\}$ by $\simplex^0$ and the interval $[0,1] \subset \R$ by $\simplex^1$ or $\I$. For $d \geq 1$ let  
	\begin{equation*}
	\simplex^d = \big\{ (x_1, \dots, x_d) \in \I^d\ | \ x_1 \leq \cdots \leq x_d \}.
	\end{equation*}
	For $i = 0,\dots,d+1$ the \textbf{coface maps} $\delta_i:\simplex^{d} \to \simplex^{d+1}$ and \textbf{codegeneracy maps} $\sigma_i : \simplex^{d+1} \to \simplex^{d}$ are respectively defined by
	\begin{equation*}
	\delta_i(x_1, \dots, x_d) = 
	\begin{cases}
	(0,x_1, \dots, x_d)    & i = 0, \\
	(x_1, \dots, x_i, x_i, \dots, x_d)    & 0 < i < d+1, \\
	(x_1, \dots, x_d, 1)    & i = d+1,
	\end{cases}
	\end{equation*}
	and
	\begin{equation*}
	\sigma_i(x_1, \dots, x_{d+1}) = (x_1, \dots, \widehat{x}_i, \dots, x_{d+1}).
	\end{equation*}
	
	We give the spaces $\simplex^d$ the coarser CW-structure making coface and codegeneracy maps into CW-maps. With respect to this CW-structure an element $(x_1,\dots,x_d)$ belongs to the $k$-skeleton of $\simplex^d$ if an only if the cardinality of $\{x_i\; |\; x_i \neq 0,1\}$ is less than or equal to $k$.
	
	The \textbf{simplex category} is the subcategory of CW-complexes with objects $\simplex^d$ and morphisms generated by coface and codegeneracy maps.
\end{definition}

\begin{figure} [h]
	\begin{center}
		\begin{tikzpicture}[scale=4]
		\tikzstyle{vertex}=[minimum size=14pt,inner sep=0pt, minimum size=.3cm]
		\tikzstyle{edge} = [draw,thick,-,black]
		
		\node[vertex] (v0) at (0,0) {(0,0,0)};
		\node[vertex] (v1) at (0,1) {(0,0,1)};
		\node[vertex] (v2) at (1,0) {(1,0,0)};
		\node[vertex] (v3) at (1,1) {(1,0,1)};
		\node[vertex] (v4) at (0.23, 0.4) {(0,1,0)};
		\node[vertex] (v5) at (0.23,1.4) {(0,1,1)};
		\node[vertex] (v6) at (1.23,0.4) {(1,1,0)};
		\node[vertex] (v7) at (1.23,1.4) {(1,1,1)};
		
		\draw (v1) -- (v3) --(v2) ;
		\draw[dashed] (v0) -- (v4) -- (v5) ;
		\draw (v7) -- (v3);
		\draw[dashed] (v4) -- (v6) ;
		
		\draw[very thick] (v0) -- (v1) -- (v5) -- (v7);
		\draw[very thick] (v1) -- (v7);
		\draw[dashed, very thick] (v0) -- (v5);
		\draw[dashed, very thick] (v0) -- (v7);
		\draw (v0) -- (v2);
		\draw (v2) -- (v6);
		\draw (v6) -- (v7);
		\end{tikzpicture}
	\end{center}
	\caption{The simplex $\simplex^3$ drawn with think lines.}
\end{figure}

\begin{definition} \label{definition: action on the interval}
	For arbitrary $x,y \in \simplex^1$ and $s \in \I$ define:
	\begin{enumerate}
		\item the diagonal approximation $\Delta : \simplex^1 \to \simplex^1 \times \simplex^1$ by
		\begin{equation*}
		\Delta(x) = 
		\begin{cases}
		(0,2x)   & \text{if }\ x \leq 1/2, \\
		(2x-1,1) & \text{if }\ x \geq 1/2,
		\end{cases}
		\end{equation*}
		\item the join $\psi : \I \times \simplex^1 \times \simplex^1 \to \simplex^1$ by
		\begin{equation*}
		\psi_s(x,y) = sx + (1-s)y,
		\end{equation*}
		\item the counit homotopy $\phi : \I \times \simplex^1 \to \simplex^1$
		\begin{equation*}
		\phi_s(x) =
		\begin{cases}
		\ \frac{2x}{2-s}  & \text{if }  x \leq \frac{2-s}{2}, \\
		\quad 1 & \text{if } x \geq \frac{2-s}{2},
		\end{cases} 
		\end{equation*}
		\item and the terminal map $\varepsilon : \simplex^1 \to \{0\}$.
	\end{enumerate}	
\end{definition}

\begin{lemma} \label{lemma: action on standard simplices}
	The maps given by
	\begin{align*} 
	&\Phi(\coproduct\,, (x_1,\dots,x_d)) = \big( (\pi_1\Delta(x_1), \dots, \pi_1\Delta(x_d)), (\pi_2\Delta(x_1), \dots, \pi_2\Delta(x_d)) \big), \\
	& \Phi(\ \counit\ , (x_1,\dots,x_d)) = (\varepsilon(x_1), \dots, \varepsilon(x_d)), \\
	& \Phi( \product\!,\, (x_1,\dots,x_d),\, (y_1,\dots,y_d)) = \big( \psi_s(x_1,y_1), \dots, \psi_s(x_d,y_d) \big), \\
	& \Phi( \ \leftcounithomotopy ,\, (x_1,\dots,x_d)) = (\phi_s(x_1), \dots, \phi_s(x_d)),
	\end{align*}
	define a $\tilde{\S}$-bialgebra structure on $\simplex^d$.
\end{lemma}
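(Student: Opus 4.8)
The plan is to exploit the finite presentation of $\tilde{\S}$. Giving a $\tilde{\S}$-bialgebra structure on $X = \bm{\Delta}^d$ is the same as giving a morphism of props from $\tilde{\S}$ into the endomorphism prop $\End(X)$, with $\End(X)(n,m) = \Hom(X^n,X^m)$. By the universal property governing the pushouts that build $\tilde{\S}$ from its generators, such a morphism is completely determined by a choice of CW-map realizing each of the four generators, subject to exactly two requirements: the boundary of each generating $1$-cell must agree with the image of its generating attaching map, and each relation must be sent to a genuine identity in $\End(X)$. The associativity, unitality, and equivariance of the bialgebra are then automatic, being encoded in the free-prop extension. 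I would therefore adopt the four displayed formulas as the definitions of the generator actions, $\counit\mapsto\varepsilon$, $\coproduct\mapsto\Delta$, $\product\mapsto\psi_s$, and $\leftcounithomotopy\mapsto\phi_s$, and verify these two requirements together with well-definedness.

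First I would check that each assignment is a well-defined CW-map into $\bm{\Delta}^d$. Well-definedness reduces to the observation that $\pi_1\Delta$, $\pi_2\Delta$, the slices $x\mapsto\psi_s(x,y)$ and $y\mapsto\psi_s(x,y)$, and $\phi_s$ are all non-decreasing on $\I$, so that applying them coordinatewise preserves the defining inequalities $x_1\leq\cdots\leq x_d$. Recalling that a point of $\bm{\Delta}^d$ lies in the $k$-skeleton precisely when its coordinates take at most $k$ distinct values inside the open interval $(0,1)$, cellularity amounts to checking that none of these maps increases this count beyond the dimension of the source cell.

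The relations require no computation. Each of the three relations is an equality of morphisms whose target biarity is $0$, that is, an equality of two maps into $X^0 = \1$; since $\1$ is terminal in $\CW$, any two such maps coincide, so all three relations hold automatically. For the attaching maps I would evaluate the two $1$-cells at their endpoints. The values $\psi_0$ and $\psi_1$ are the two coordinate projections, which are exactly the two composites $\id\odot\counit$ and $\counit\odot\id$ prescribed by the faces of $\product$; and $\phi_0=\id$ recovers the $s=0$ face while $\phi_1=\pi_2\circ\Delta$ (namely $\min(2x,1)$) recovers the $s=1$ face, $\coproduct$ followed by a counit on the first output. All of these match by direct substitution, so the assignments descend through the pushouts.

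I expect the cellularity of $\psi_s$ and $\phi_s$ to be the only real obstacle, and I expect the simplex ordering to be precisely what makes it go through. The danger is that the single parameter $s$, together with a low-dimensional input cell, could produce many distinct interior values. For $\phi_s(x)=\min\!\big(\tfrac{2x}{2-s},1\big)$ this is controlled because $x\mapsto\tfrac{2x}{2-s}$ is injective on the locus where the output is interior, so distinct interior outputs arise only from distinct interior inputs. For $\psi_s$ the subtle point is that producing two distinct new interior values from boundary data would require coordinates exhibiting the patterns $(x_i,y_i)=(0,1)$ and $(x_i,y_i)=(1,0)$ simultaneously, and the joint monotonicity of $(x_i)$ and $(y_i)$ forbids this. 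A careful count of distinct values along the monotone staircase of pairs $(x_i,y_i)$, discarding the trivial endpoints $(0,0)$ and $(1,1)$ whose images are $0$ and $1$, then shows that the number of interior output values never exceeds the number of distinct interior values among the $x_i$ plus that among the $y_i$, augmented by one when $s\in(0,1)$; this is exactly the dimension of the source cell, which is what cellularity demands.
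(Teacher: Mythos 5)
Your proposal is correct and follows essentially the same route as the paper: define the action on the four generators, establish well-definedness via monotonicity and cellularity by counting distinct coordinates in the open interval, and verify the attaching-map identities at the endpoints of the two generating $1$-cells, with the three relations holding because their target biarity is $0$ (the paper writes these checks out as explicit identifications with the point, which amounts to your terminal-object observation). The only portion of the paper's proof you omit is the verification of naturality with respect to the coface and codegeneracy maps; this is not part of the lemma's statement as written, but the paper establishes it here because Theorem \ref{theorem: action on simplicial sets} depends on it.
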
	

\begin{proof} 
	For any $s \in \I$ and $x \in \simplex^1$ each of the functions
	\begin{equation*}
	\pi_1\Delta,\, \pi_2\Delta,\, \psi_s(x, -),\, \psi(-,x),\, \phi_s : \simplex^1 \to \simplex^1
	\end{equation*}
	is order preserving, so the maps above are well defined. By counting the number of distinct coordinates of $(x_1,\dots,x_d)$ that are not equal to $0$ or $1$ before and after applying the maps above we can verify they are cellular. To check these maps define a $\tilde{\S}$-structure we need to verify they satisfy the identities coming from the attaching maps and relations on the generating cells of $\tilde{\S}$. In what follows we use the isomorphisms $\simplex^0 \times \simplex^d \cong \simplex^d \cong \simplex^d \times \simplex^0$ with no further comment. For $\bm{x} = (x_1,\dots,x_n)$ and $\bm{y} = (y_1,\dots,y_n)$ we have\\
	\textit{Attaching maps}:
	\begin{align*}
	\ &\Phi( 
	\begin{tikzpicture}[scale=.25]
	\draw (0,0)--(0,-.7);
	\draw (0,0)--(.5,.5) node[above, scale = .5]{$\scriptstyle 1$};
	\draw (0,0)--(-.5,.5) node[above, scale = .5]{$\scriptstyle 0$};)
	\end{tikzpicture}
	\!,\, \bm x,\, \bm y ) =
	\bm y \cong
	\Phi(\, 
	\begin{tikzpicture}[scale=.25]
	\draw (0,0)--(0,1.3);
	\draw [fill] (0,.1) circle [radius=0.1];
	\draw (.7,0)--(.7,1.3);
	\end{tikzpicture} 
	\, ,\, \bm x,\, \bm y )
	\\
	\ &\Phi( 
	\begin{tikzpicture}[scale=.25]
	\draw (0,0)--(0,-.7);
	\draw (0,0)--(.5,.5) node[above, scale = .5]{$\scriptstyle 0$};
	\draw (0,0)--(-.5,.5) node[above, scale = .5]{$\scriptstyle 1$};)
	\end{tikzpicture}
	\!,\, \bm x,\, \bm y ) =
	\bm x \cong
	\Phi(\, 
	\begin{tikzpicture}[scale=.25]
	\draw (0,0)--(0,1.3);
	\draw [fill] (.7,.1) circle [radius=0.1];
	\draw (.7,0)--(.7,1.3);
	\end{tikzpicture} 
	\, ,\, \bm x,\, \bm y )
	\\
	\ &\Phi(\, 
	\begin{tikzpicture}[scale=.25]
	\draw (0,0)--(0,1.4);
	\draw [fill=white] (0,.7) circle [radius=0.13];
	\node at (.25,1.15)[scale = .5]{$\ 0$};
	\end{tikzpicture}
	\!,\, \bm x) =
	\bm x = 		
	\Phi(\ 
	\begin{tikzpicture}[scale=.25]
	\draw (0,-.5)--(0,.8);
	\end{tikzpicture}
	\ ,\, \bm x )
	\\
	\ & \Phi(\, 
	\begin{tikzpicture}[scale=.25]
	\draw (0,0)--(0,1.4);
	\draw [fill=white] (0,.7) circle [radius=0.13];
	\node at (.25,1.15)[scale = .5]{$\ 1$};
	\end{tikzpicture}
	\!,\, \bm x) =
	\left(\begin{cases}
	2x_1 & \text{if } x_1 \leq \frac{1}{2} \\
	\ 1  & \text{if } x_1 \geq \frac{1}{2}
	\end{cases},
	\dots,
	\begin{cases}
	2x_d  & \text{if } x_d \leq \frac{1}{2} \\
	\ 1 & \text{if } x_d \geq \frac{1}{2}
	\end{cases}
	\right)
	\, \cong \Phi(\, 
	\begin{tikzpicture}[scale=.25]
	\draw (0,0)--(0,.8);
	\draw (0,0)--(.5,-.5);
	\draw (0,0)--(-.5,-.5);
	\draw [fill] (-.5,-.5) circle [radius=0.1];
	\end{tikzpicture}\,,\, \bm x).
	\end{align*}
	\textit{Relations}:
	\begin{align*}
	\ & \Phi( 
	\begin{tikzpicture}[scale=.25]
	\draw (0,0)--(0,.8);
	\draw (0,0)--(.5,-.5);
	\draw (0,0)--(-.5,-.5);
	\draw [fill] (-.5,-.5) circle [radius=0.1];
	\draw [fill] (.5,-.5) circle [radius=0.1];
	\end{tikzpicture}
	\, ,\, \bm x) \cong 0 \cong
	\Phi(\, 
	\begin{tikzpicture}[scale=.25]
	\draw (0,0)--(0,1.3);
	\draw [fill] (0,0) circle [radius=0.1];
	\end{tikzpicture} 
	\, ,\, \bm x )
	\\
	\ & \Phi( 
	\begin{tikzpicture}[scale=.25]
	\draw (0,0)--(0,-.7);
	\draw (0,0)--(.5,.5) node[above, scale = .5]{$s$};
	\draw (0,0)--(-.5,.5) node[above, scale = .5]{$\scriptstyle 1$-$s$};)
	\draw [fill] (0,-.65) circle [radius=0.1];
	\end{tikzpicture}
	\!,\, \bm x,\, \bm y ) \cong 0 \cong
	\Phi(\, 
	\begin{tikzpicture}[scale=.25]
	\draw (0,0)--(0,1.3);
	\draw [fill] (0,0) circle [radius=0.1];
	\draw (.7,0)--(.7,1.3);
	\draw [fill] (.7,0) circle [radius=0.1];
	\end{tikzpicture} 
	\, ,\, \bm x,\, \bm y )
	\\
	\ & \Phi(\, 
	\begin{tikzpicture}[scale=.25]
	\draw (0,0)--(0,1.4);
	\draw [fill=white] (0,.7) circle [radius=0.13];
	\node at (.25,1.15)[scale = .5]{$s$};
	\draw [fill] (0,0) circle [radius=0.1];
	\end{tikzpicture}
	, \bm x) \cong 0 \cong
	\Phi(\, 
	\begin{tikzpicture}[scale=.25]
	\draw (0,0)--(0,1.3);
	\draw [fill] (0,0) circle [radius=0.1];
	\end{tikzpicture} 
	\, ,\, \bm x ).
	\end{align*}	
	Finally we need to verify naturality. Given the coordinate-wise nature of the $\tilde{\S}$-bialgebra structure, naturality with respect to codegeneracy maps and coface maps $\delta_i : \simplex^d \to \simplex^{d+1}$ for $0 < i < d$ is immediate. Using
	\begin{equation*}
	\begin{split}
	\Phi(\coproduct\,, \delta_0 x) & = 
	\big( (\pi_1\Delta(0), \pi_1\Delta(x)), (\pi_2\Delta(0), \pi_2\Delta(x)) \big) \\ & = 
	\big( (0, \pi_1\Delta(x)), (0, \pi_2\Delta(x)) \big) \\ & =
	\delta_0 (\pi_1\Delta(x), \pi_2\Delta(x)) \\ & =
	\delta_0 \Phi(\coproduct\,, x)
	\end{split}
	\end{equation*}
	and
	\begin{align*}
	\Phi(\product, \delta_0 x, \delta_0 y) & =  (\psi_s(0,0), \psi_s(x,y)) &
	\Phi(\,\leftcounithomotopy, \delta_0 x) & = (\phi_s(0), \phi_s(x)) \\ 
	& = (0, \Phi(\product, x, y) ) & & = (0, \Phi(\,\leftcounithomotopy, x) ) \\
	& = \delta_0 \Phi(\product, x, y) & & = \delta_0 \Phi(\,\leftcounithomotopy, x)
	\end{align*}
	we can deduce the naturality of $\delta_0$. The naturality of $\delta_d$ is derived analogously.
\end{proof}

\begin{definition} 
	A \textbf{simplicial set} $\Gamma$ is a contravariant functor from the simplex category to the category of sets. Denote $\Gamma(\simplex^d)$ by $\Gamma_d$. Its \textbf{geometric realization} is the CW-complex 
	\begin{equation*}
	|\Gamma| = \coprod_{d \geq 0} \Gamma_d \times \bm \Delta^d \Big/ \sim		
	\end{equation*}
	where $\tau^*(\gamma) \times \x \sim a \times \tau(\x)$ for any $\tau \in \Hom(\bm \Delta^d, \bm \Delta^{e})$, $\gamma \in \Gamma_{e}$ and $\x \in \bm \Delta^d$. 
\end{definition}

\begin{theorem} \label{theorem: action on simplicial sets}
	Let $\Gamma$ be a simplicial set. A natural $U(\tilde{\S})$-coalgebra structure is defined on its geometric realization by 
	\begin{equation*}
	U(\Phi)\big( g, (\gamma, \x) \big) = \big( (\gamma, \pi_1\Phi(g,\x)),\, \dots\, ,\, (\gamma, \pi_m\Phi(g,\x)) \big)
	\end{equation*}
	where $g \in U(\tilde{\S})(m)$, $\gamma \in \Gamma_d$ and $\x \in \bm \Delta^d$.
\end{theorem}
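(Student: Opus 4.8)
The plan is to realize the claimed maps as the descent to the quotient $|\Gamma|$ of the coordinate-wise $U(\tilde{\S})$-coalgebra structure already available on each $\bm{\Delta}^d$, and then to check the coalgebra axioms and naturality, all of which reduce to statements about $\bm{\Delta}^d$.

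First I would invoke the general principle recorded in the preliminaries: a $\tilde{\S}$-bialgebra structure on a space induces a $U(\tilde{\S})$-coalgebra structure on it, obtained by restricting to the components $\tilde{\S}(1,m) = U(\tilde{\S})(m)$. Applied to the $\tilde{\S}$-bialgebra structure on $\bm{\Delta}^d$ constructed in Lemma~\ref{lemma: action on standard simplices}, this equips each $\bm{\Delta}^d$ with a $U(\tilde{\S})$-coalgebra structure
\begin{equation*}
U(\Phi)(g, \x) = \big( \pi_1\Phi(g,\x), \dots, \pi_m\Phi(g,\x) \big), \qquad g \in U(\tilde{\S})(m),\ \x \in \bm{\Delta}^d .
\end{equation*}
These are CW-maps because the bialgebra structure maps of Lemma~\ref{lemma: action on standard simplices} were shown to be cellular.

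The crux is to show that the formula in the statement descends to the quotient defining $|\Gamma|$. For each $d$ the assignment $(g, \gamma, \x) \mapsto \big( (\gamma, \pi_1\Phi(g,\x)), \dots, (\gamma, \pi_m\Phi(g,\x)) \big)$ is a continuous map $U(\tilde{\S})(m) \times \Gamma_d \times \bm{\Delta}^d \to |\Gamma|^m$, with $\Gamma_d$ discrete, since each coordinate is the continuous map $(g,\x) \mapsto \pi_i\Phi(g,\x)$ followed by the inclusion of the $\gamma$-indexed piece into $|\Gamma|$. To see these maps are compatible with the relation $\tau^*(\gamma) \times \x \sim \gamma \times \tau(\x)$, fix $\tau \in \Hom(\bm{\Delta}^d, \bm{\Delta}^e)$ and compare $i$-th coordinates. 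The naturality computations at the end of Lemma~\ref{lemma: action on standard simplices}, carried out for the codegeneracies and all cofaces and hence valid for every $\tau$ by composition, give $\pi_i\Phi(g, \tau\x) = \tau\big(\pi_i\Phi(g,\x)\big)$. Therefore
\begin{equation*}
\big( \gamma, \pi_i\Phi(g, \tau\x) \big) = \big( \gamma, \tau(\pi_i\Phi(g,\x)) \big) \sim \big( \tau^*(\gamma), \pi_i\Phi(g,\x) \big),
\end{equation*}
so the two sides agree in $|\Gamma|^m$ and the maps descend to $U(\Phi) \colon U(\tilde{\S})(m) \times |\Gamma| \to |\Gamma|^m$, which are CW-maps for the quotient CW-structure.

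It remains to verify that $U(\Phi)$ satisfies the associativity, equivariance, and unitality relations and is natural in $\Gamma$. Because the structure acts only on the $\bm{\Delta}^d$-coordinate while carrying $\gamma$ along unchanged, each coalgebra relation on $|\Gamma|$ reduces coordinate-by-coordinate to the corresponding relation for the $U(\tilde{\S})$-coalgebra on $\bm{\Delta}^d$ from the first step, and hence holds. For naturality, a simplicial map $f \colon \Gamma \to \Gamma'$ realizes to $|f|(\gamma, \x) = (f(\gamma), \x)$, which affects only the $\Gamma$-coordinate and therefore commutes with $U(\Phi)$. I expect the descent in the second step to be the only substantive point, since it is exactly there that the naturality of $\Phi$ under simplicial operators is needed; the remaining verifications are formal consequences of the coordinate-wise definition.
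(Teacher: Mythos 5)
Your proposal is correct and follows essentially the same route as the paper: the substantive step in both is the descent to the quotient $|\Gamma|$, verified by comparing the images of the two representatives $\tau^*(\gamma)\times\x$ and $\gamma\times\tau(\x)$ via the identity $\tau\big(\pi_i\Phi(g,\x)\big)=\pi_i\Phi(g,\tau(\x))$ supplied by the naturality part of Lemma~\ref{lemma: action on standard simplices}, after which the coalgebra axioms are inherited coordinate-wise from $\Phi$. Your additional remarks on naturality in $\Gamma$ and on cellularity are consistent with, and slightly more explicit than, what the paper records.
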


\begin{proof}
	To verify $U(\Phi) : U(\tilde{\S})(m) \times |\Gamma| \to |\Gamma|^m$ is a well defined map, consider two representatives $\tau^*(\gamma) \times \x$ and $\gamma \times \tau(\x)$. We have
	\begin{equation*}
	\big( \tau^*(\gamma), \pi_1\Phi(g,\x),\ \dots\ ,\ \tau^*(\gamma), \pi_m\Phi(g,\x) \big)
	\end{equation*}
	is equivalent in $|\Gamma|^m$ to
	\begin{equation*}
	\big( \gamma, \tau\big( \pi_1 \Phi(g, \x) \big),\ \dots\ ,\ \gamma, \tau\big( \pi_m\Phi(g,\x) \big) \big)
	\end{equation*}
	which equals
	\begin{equation*}
	\big( \gamma, \pi_1 \Phi(g,\tau(\x)),\ \dots\ ,\ \gamma, \pi_m \Phi(g,\tau(\x)) \big) 
	\end{equation*}	
	since, by naturality,
	\begin{equation*}
	\tau \big( \pi_i \Phi(g,\x) \big) = \pi_i \tau^m \Phi(g,\x) = \pi_i \Phi(g,\tau(\x)).
	\end{equation*}
	The equivariance of $U(\Phi)$ and the identity
	\begin{equation*}
	U(\Phi)\big( h,\, U(\Phi) ( g, (\gamma, \x) ) \big) = U(\Phi)\big( h \circ g, (\gamma, \x) \big)
	\end{equation*}
	follow from those satisfied by $\Phi$.
\end{proof}

\begin{remark}
	It is not the case that $|\Gamma|$ carries an $\tilde{\S}$-bialgebra structure. For example, if $|\Gamma| = \{x,y\}$ then $\Phi(\product,\, x,y)$ is not well defined.
\end{remark}

\begin{remark} \label{remark: explicit action of cellular generators on standard simplices}
	For any $d \geq 0$ and $j \geq 0$ let $[j] = \big( [j]_1, \dots, [j]_d \big) \in \bm \Delta^d$ be given by
	\begin{equation*}
	[j]_k = 
	\begin{cases}
	0 & k \leq d-j \\
	1 & k > d-j.
	\end{cases}
	\end{equation*}
	For any finite set of integers $\{v_0, \dots, v_k\}$, denote the convex closure of $[v_0], \dots, [v_k]$ by $[v_0, \dots, v_k]$. These subsets correspond to the cells of $\bm \Delta^d$ and we have
	\begin{equation*}
	\begin{split}
	& \Phi(\coproduct\,, [v_0,\dots,v_k]) = \coprod_{i=0}^d [v_0,\dots,v_i] \times [v_i,\dots,v_k] \\
	& \Phi(\, \counit\,, [v_0,\dots,v_k]) = [0] \\
	\coprod_{s\in\I} & \Phi(\product,\ [v_0,\dots,v_j],\ [v_{j+1},\dots,v_{k}]) = [v_0,\dots, v_j, v_{j+1}, \dots,v_{k}] \\
	\coprod_{s\in\I} & \Phi(\,\leftcounithomotopy, [v_0,\dots,v_k]) = [v_0,\dots,v_k].
	\end{split}
	\end{equation*}
\end{remark}

\begin{remark}
	We conjecture that a natural cellular action of $U(\tilde{\S})$ can be defined on the geometric realization of cubical sets with or without connections. For cubical chains, an action of the prop obtained by applying cellular chains to $U(\tilde{\S})$ is defined in \cite{medina2020odd}. 
\end{remark}

\section{The prop $\MS$}

In this section we introduce the finitely presented $E_\infty$-prop $\MS$. We provide a description of $\MS$ in terms of oriented surfaces with a weighted 1-skeleton. We show that the operad associate to $\MS$ is isomorphic to Kaufmann's Arc Surface model for the \mbox{$E_\infty$-operad} \cite{kaufmann09dimension}, and that its cellular chains are isomorphic, up to signs, to the Surjection operad \cite{mcclure2003multivariable, berger2004combinatorial}. This two identifications combine to verify a conjecture of Kaufmann.

\subsection{Edge-weights and definition}

\begin{definition} \label{Prop Su}
	Let $\S$ be the prop generated by 
	\begin{equation*}
	\counit \in \S(1,0)_0 \quad \coproduct \in \S(1,2)_0 \quad \product \in \S(2,1)_1
	\end{equation*} 
	with generating attaching maps
	\begin{equation*}
	\begin{tikzpicture}[scale=.25]
	\draw (0,0)--(0,-.7);
	\draw (0,0)--(-.5,.5) node[above, scale = .5]{$\scriptstyle 1$};
	\draw (0,0)--(.5,.5) node[above, scale = .5]{$\scriptstyle 0$};)
	\end{tikzpicture}
	=\
	\begin{tikzpicture}[scale=.25]
	\draw (0,0)--(0,1.3);
	\draw [fill] (.7,.1) circle [radius=0.1];
	\draw (.7,0)--(.7,1.3);
	\end{tikzpicture}
	\qquad
	\begin{tikzpicture}[scale=.25]
	\draw (0,0)--(0,-.7);
	\draw (0,0)--(.5,.5) node[above, scale = .5]{$\scriptstyle 1$};
	\draw (0,0)--(-.5,.5) node[above, scale = .5]{$\scriptstyle 0$};)
	\end{tikzpicture}
	=\
	\begin{tikzpicture}[scale=.25]
	\draw (0,0)--(0,1.4);
	\draw [fill] (0,.1) circle [radius=0.1];
	\draw (.7,0)--(.7,1.3);
	\end{tikzpicture}
	\end{equation*}
	and restricted by the relations 
	\begin{equation*}
	\begin{tikzpicture}[scale=.25]
	\draw (0,0)--(0,.8);
	\draw (0,0)--(.5,-.5);
	\draw (0,0)--(-.5,-.5);
	\draw [fill] (-.5,-.5) circle [radius=0.1];
	\end{tikzpicture}
	=\
	\begin{tikzpicture}[scale=.25]
	\draw (0,0)--(0,1.5);
	\end{tikzpicture}
	\; = 
	\begin{tikzpicture}[scale=.25]
	\draw (0,0)--(0,.8);
	\draw (0,0)--(.5,-.5);
	\draw (0,0)--(-.5,-.5);
	\draw [fill] (.5,-.5) circle [radius=0.1];
	\end{tikzpicture}
	\qquad
	\begin{tikzpicture}[scale=.25]
	\draw (0,0)--(0,-.7);
	\draw (0,0)--(.5,.5) node[above, scale = .5]{$s$};
	\draw (0,0)--(-.5,.5) node[above, scale = .5]{$\scriptstyle 1$-$s$};)
	\draw [fill] (0,-.65) circle [radius=0.1];
	\end{tikzpicture}
	=\
	\begin{tikzpicture}[scale=.25]
	\draw (0,0)--(0,1.3);
	\draw [fill] (0,0) circle [radius=0.1];
	\draw (.7,0)--(.7,1.3);
	\draw [fill] (.7,0) circle [radius=0.1];
	\end{tikzpicture}\,.
	\end{equation*}
\end{definition}	

\begin{remark}
	This prop is a strictly counital version of $\tilde{\S}$ and it receives a quotient map from it. We notice that a simplified version of the proof given for Lemma \ref{lemma: homotopy type of S tilde} shows this is an $E_\infty$-prop.
\end{remark}

We define alternative coordinates on the prop $\S$.	Consider a $d$-cell in $\S(n,m)$ and an ($n,m$)-graph $G$ supporting it. The \textbf{edge-weight coordinates} of this cell are given by the assignment of a non-negative real number to each edge of $G$, referred to as its \textbf{weight}, satisfying the following conditions:\footnote{We say an edge belongs to $in(G),\ out(G),\ in(v)$ or $out(v)$ if one of its two half-edges does.} 
\begin{enumerate}
	\item Edges of the form \counit\ have weight $0$.
	\item Edges in $out(G)$ have weight $1$.
	\item For every vertex $v$ of $G$, the sum of the edge-weights in $in(v)$ and in $out(v)$ are the same.
\end{enumerate}
Edge-weight coordinates are well defined as we can see from:
\begin{center}
	\boxed{
		\begin{tikzpicture}[scale=.5]
		\draw (0,0)--(0,.5);
		\node at (0,.8){$\scriptstyle b$};
		\draw (0,0)--(.4,-.5); 
		\draw (0,0)--(-.4,-.5);
		\node at (-.5,-.8){$\scriptstyle 0$};
		\filldraw (-.4,-.5) circle (2pt);
		\node at (.5,-.8){$\scriptstyle b$};
		\draw[<->] (.7,0)--(1.4,0);  
		\end{tikzpicture}
		\begin{tikzpicture}[scale=.5]
		\draw (0,-.5)--(0,.5);
		\node at (0,.8){$\scriptstyle b$};
		\node at (0,-.8){$\scriptstyle b$};
		\end{tikzpicture}
		\begin{tikzpicture}[scale=.5]
		\draw[<->] (-.7,0)--(-1.4,0);
		\draw (0,0)--(0,.5);
		\node at (0,.8){$\scriptstyle b$};
		\draw (0,0)--(.4,-.5); 
		\draw (0,0)--(-.4,-.5);
		\node at (.5,-.8){$\scriptstyle 0$};
		\filldraw (.4,-.5) circle (2pt);
		\node at (-.5,-.8){$\scriptstyle b$};
		\end{tikzpicture}
		\qquad
		\begin{tikzpicture}[scale=.5]
		\draw (0,0)--(0,-.6);
		\draw (0,0)--(.4,.5); 
		\draw (0,0)--(-.4,.5);
		\node at (.5,.7){$\scriptstyle 0$};
		\node at (-.5,.7){$\scriptstyle 0$};
		\filldraw (0,-.6) circle (2pt);
		\node at (.25,-.8){$\scriptstyle 0$};
		\draw[<->] (1.4,0)--(2.2,0);
		
		\node at (.35,.09){$\scriptscriptstyle s$};
		\node at (-.7,.1){$\scriptscriptstyle 1-s$};
		\end{tikzpicture}
		\ \ 
		\begin{tikzpicture}[scale=.5]
		\draw (.8,-.7)--(.8,.7);
		\filldraw (.8,-.6) circle (2pt);
		\node at (1.1,.7){$\scriptstyle 0$};
		\node at (1.1,-.7){$\scriptstyle 0$};
		
		\draw (1.6,-.7)--(1.6,.7);
		\filldraw (1.6,-.6) circle (2pt);
		\node at (1.9,.7){$\scriptstyle 0$};
		\node at (1.9,-.7){$\scriptstyle 0$};
		\end{tikzpicture}	}
\end{center}

We can pass from the original coordinates induced from\!\! \product\! to edge-weight coordinates via the following inductive procedure: Edges containing \counit\ are set to have weights $0$. Edges in $out(G)$ are set to have weight 1. The other edges get their weights from
\begin{center}
	\boxed{
		\begin{tikzpicture}[scale=.5]
		\draw (0,0)--(0,.6);
		\draw (0,0)--(.4,-.5); 
		\draw (0,0)--(-.4,-.5);
		\node at (-.5,-.8){$\scriptstyle a$};
		\node at (.5,-.8){$\scriptstyle b$};
		\draw[|->] (1,0)--(1.8,0);  
		\end{tikzpicture}
		\begin{tikzpicture}[scale=.5]
		\draw (0,0)--(0,.6);
		\draw (0,0)--(.4,-.5); 
		\draw (0,0)--(-.4,-.5);
		\node at (-.5,-.8){$\scriptstyle a$};
		\node at (.5,-.8){$\scriptstyle b$};
		\node at (0,.9){$\scriptstyle a+b$};
		\node at (2.5,0){ and };
		\end{tikzpicture}
		\quad
		\begin{tikzpicture}[scale=.5]
		\draw (0,0)--(0,-.6);
		\draw (0,0)--(.4,.5); 
		\draw (0,0)--(-.4,.5);
		\node at (-.7,.1){$\scriptscriptstyle 1-s$};
		\node at (.4,.09){$\scriptscriptstyle s$};
		\node at (0,-.8){$\scriptstyle a$};
		\draw[|->] (1.7,0)--(2.5,0);  
		\end{tikzpicture}
		\begin{tikzpicture}[scale=.5]
		\draw (0,0)--(0,-.6);
		\draw (0,0)--(-.4,.5); \node at (-.95,.7){$\scriptstyle (1-s)a$};
		\draw (0,0)--(.4,.5); \node at (.6,.7){$\scriptstyle sa$};
		\node at (0,-.8){$\scriptstyle a$};
		\end{tikzpicture}
	}	
\end{center}
The passage from edge-weight coordinates to the original coordinates is induced by  
\begin{center}
	\boxed{
		\begin{tikzpicture}[scale=.5]
		\draw (0,0)--(0,.6);
		\draw (0,0)--(.4,-.5); 
		\draw (0,0)--(-.4,-.5);
		\node at (-.5,-.8){$\scriptstyle a$};
		\node at (.5,-.8){$\scriptstyle b$};
		\node at (0,.9){$\scriptstyle a+b$};
		\draw[|->] (1,0)--(1.8,0);  
		\draw (2.5,0)--(2.5,.6);
		\draw (2.5,0)--(2.9,-.5); 
		\draw (2.5,0)--(2.1,-.5);
		\node at (4.5,0){ and };
		\end{tikzpicture}
		\quad
		\begin{tikzpicture}[scale=.5]
		\draw (0,0)--(0,-.6);
		\draw (0,0)--(.4,.5); 
		\draw (0,0)--(-.4,.5);
		\node at (.5,.7){$\scriptstyle c$};
		\node at (-.5,.7){$\scriptstyle b$};
		\node at (0,-.8){$\scriptstyle a$};
		\draw[|->] (1,0)--(1.8,0);  
		
		\draw (3,0)--(3,-.6);
		\draw (3,0)--(3.4,.5); 
		\draw (3,0)--(2.6,.5); 
		\node at (2.5,.1){$\scriptscriptstyle \frac{b}{a}$};
		\node at (3.5,.1){$\scriptscriptstyle \frac{c}{a}$};
		\end{tikzpicture}
	}
\end{center}

\begin{definition} \label{definition: MS}
	Let $\MS$ be the quotient of $\S$ by the involutive, coassociative, associative, commutative and Leibniz relations
	\begin{equation*}
	\boxed{
		\begin{tikzpicture}[scale=.48]
		\path[draw] (0,0)--(0,.5)--(-.5,1)--(0,1.5)--(0,2);
		\path[draw] (0,.5)--(.5,1)--(0,1.5);
		\node[below] at (0,0){$\scriptstyle a$};
		\node[above] at (0,2){$\scriptstyle a$};
		
		\node at (-.7,1) {$\scriptstyle b$};
		\node at (.7,1) {$\scriptstyle c$};
		
		\draw[<->] (1.3,1)--(2.1,1);
		
		\draw (2.6,2)--(2.6,0);
		
		\node[below] at (2.6,0){$\scriptstyle a$};
		\node[above] at (2.6,2){$\scriptstyle a$};
		\end{tikzpicture}	
		\qquad \
		\begin{tikzpicture}[scale=.4]
		\path[draw] (0,0)--(0,-1)--(-1,-2);
		\draw (0,-1)--(1,-2);
		\draw (-.5,-1.5)--(0,-2);
		
		\node[above] at (0,0){$\scriptstyle a+b+c$};
		\node at (-1,-2.7){$\scriptstyle a$};
		\node at (0,-2.65){$\scriptstyle b$};
		\node at (1,-2.7){$\scriptstyle c$};
		
		\draw[<->] (1.3,-1)--(2.3,-1);
		
		\path[draw] (3.5,0)--(3.5,-1)--(2.5,-2);
		\draw (3.5,-1)--(4.5,-2);
		\draw (4,-1.5)--(3.5,-2);
		
		\node[above] at (3.5,0){$\scriptstyle a+b+c$};
		\node at (2.5,-2.7){$\scriptstyle a$};
		\node at (3.5,-2.65){$\scriptstyle b$};
		\node at (4.5,-2.7){$\scriptstyle c$};
		\end{tikzpicture}
		\qquad \ 
		\begin{tikzpicture}[scale=.4]
		\path[draw] (0,0)--(0,1)--(-1,2);
		\draw (0,1)--(1,2);
		\draw (-.5,1.5)--(0,2);
		
		\node[below] at (0,0){$\scriptstyle a+b+c$};
		\node[above] at (-1,2){$\scriptstyle a$};
		\node[above] at (0,2){$\scriptstyle b$};
		\node[above] at (1,2){$\scriptstyle c$};
		
		\draw[<->] (1.3,1)--(2.3,1);
		
		\path[draw] (3.5,0)--(3.5,1)--(2.5,2);
		\draw (3.5,1)--(4.5,2);
		\draw (4,1.5)--(3.5,2);
		
		\node[below] at (3.5,0){$\scriptstyle a+b+c$};
		\node[above] at (2.5,2){$\scriptstyle a$};
		\node[above] at (3.5,2){$\scriptstyle b$};
		\node[above] at (4.5,2){$\scriptstyle c$};
		\end{tikzpicture}
		\qquad
		\begin{tikzpicture}[scale=.4]
		\path[draw] (0,0)--(0,0.4)--(-1,.8)--(1,1.6)--(1,2);
		\path[draw] (0,0.4)--(1,.8)--(0.2,1.12);
		\path[draw] (-0.2,1.28)--(-1,1.6)--(-1,2);
		
		\node[below] at (0,0){$\scriptstyle a+b$};
		\node[above] at (-1,2){$\scriptstyle a$};
		\node[above] at (1,2){$\scriptstyle b$};
		
		\draw[<->] (1.5,1)--(2.5,1);
		
		\path[draw] (4,0)--(4,0.8)--(3,1.6)--(3,2);
		\path[draw] (4,0.8)--(5,1.6)--(5,2);
		
		\node[below] at (4,0){$\scriptstyle a+b$};
		\node[above] at (3,2){$\scriptstyle a$};
		\node[above] at (5,2){$\scriptstyle b$};
		\end{tikzpicture}
	}
	\end{equation*}
	and 
	\begin{equation*}
	\boxed{
		\begin{tikzpicture}[scale=.7]
		\draw (0,.3)--(0,-.3);
		\draw (0,.3)--(.5,.8);\node at (.6,1){$\scriptstyle a_2$};
		\draw (0,.3)--(-.5,.8);\node at (-.6,1){$\scriptstyle a_1$};
		\draw (0,-.3)--(.5,-.8);\node at (.6,-1){$\scriptstyle b_2$};
		\draw (0,-.3)--(-.5,-.8);\node at (-.6,-1){$\scriptstyle b_1$};
		
		\draw[<->] (1,0)--(1.7,0);
		\end{tikzpicture}
		\ \ \
		\begin{tikzpicture}[scale=.7]
		\draw (-.9,.8)--(-.9,.3)--(-1.3,0)--(-1.3,-.8);
		\draw (-.9,.3)--(-.1,-.3);
		\draw (.3,.8)--(.3,0)--(-.1,-.3)--(-.1,-.8);
		\node at (1.4,0){ or };
		\node at (-.9,1){$\scriptstyle a_1$}; \node at (.3,1){$\scriptstyle a_2$};  		
		\node at (-1.3,-1){$\scriptstyle b_1$}; \node at (-.1,-1){$\scriptstyle b_2$}; 
		\draw (-.77,-.19) node[scale = .8] {$\scriptscriptstyle a_1-b_1$};	
		
		\draw (2.4,.8)--(2.4,-.8);
		\draw (3.5,.8)--(3.5,-.8);
		\node at (2.4,1){$\scriptstyle a_1$}; \node at (3.5,1){$\scriptstyle a_2$};  		
		\node at (2.4,-1){$\scriptstyle b_1$}; \node at (3.5,-1){$\scriptstyle b_2$}; 
		
		\node at (4.5,0){ or };
		\draw (5.6,.8)--(5.6,0)--(6,-.3)--(6,-.8);
		\draw (6,-.3)--(6.8,.3);
		\draw (6.8,.8)--(6.8,.3)--(7.2,0)--(7.2,-.8);
		\node at (5.6,1){$\scriptstyle a_1$}; \node at (6.8,1){$\scriptstyle a_2$};  		
		\node at (6,-1){$\scriptstyle b_1$}; \node at (7.2,-1){$\scriptstyle b_2$};
		\draw (6.72,-.19) node[scale = .8]{$\scriptscriptstyle b_1-a_1$}; 
		\end{tikzpicture}
	}
	\end{equation*}
	depending respectively on if $a_1 > b_1$, $a_1 = b_1$ or $a_1 < b_1$.
\end{definition}

\begin{remark}\label{remark: composition in terms of cutting squares}
	We can express the Leibniz relation in the following alternative way. Consider
	\begin{equation*}
	\boxed{\begin{tikzpicture}[scale=.6]
		\draw (0,.3)--(0,-.3);
		\draw (0,.3)--(.5,.8);\node at (.6,1){$\scriptstyle a_2$};
		\draw (0,.3)--(-.5,.8);\node at (-.6,1){$\scriptstyle a_1$};
		\draw (0,-.3)--(0,.3);
		\draw (0,-.3)--(.5,-.8);\node at (.7,-1){$\scriptstyle b_2$};
		\draw (0,-.3)--(-.5,-.8);\node at (-.6,-1){$\scriptstyle b_1$};
		\end{tikzpicture}}
	\end{equation*}
	with $a_1 + a_2 = b_1 + b_2$. In $\R^2$ consider the rectangle with opposite vertices at coordinates $(0,0)$ and $(b_1 + b_2, 3)$. Cut along the lines joining $(b_1, 0)$ with $(b_1 , 2)$ and $(a_1 , 3)$ with $(a_1 , 1)$. Deformation retract keeping the vertical coordinate invariant to a $(2,2)$-graph with labelings induced from the plane. Give this $(2,2)$-graph the edge-weight coordinates coming from the width of their corresponding sub-rectangle For example, if $a_1 > b_1$ we have
	\begin{center}
		\begin{tikzpicture}[scale=.5]
		\draw (0,0) rectangle (6,3); 
		
		\node at (1.5,3.2){$\scriptstyle a_1$};
		\node at (4.5,3.2){$\scriptstyle a_2$};
		\draw[dashed] (3,3)--(3,1);
		
		\node at (1,-.2){$\scriptstyle b_1$};
		\node at (4,-.2){$\scriptstyle b_2$};
		\draw[dashed] (2,0)--(2,2);
		
		\node at (7.3, 1.5) {$\sim$};
		\end{tikzpicture}
		\begin{tikzpicture}[scale=.8]
		\draw (-.9,.8)--(-.9,.3)--(-1.3,0)--(-1.3,-.8);
		\draw (-.9,.3)--(-.1,-.3);
		\draw (.3,.8)--(.3,0)--(-.1,-.3)--(-.1,-.8);
		
		\node at (-.9,1){$\scriptstyle a_1$}; \node at (.3,1){$\scriptstyle a_2$};  		
		\node at (-1.3,-1){$\scriptstyle b_1$}; \node at (-.1,-1){$\scriptstyle b_2$}; 
		\node at (-.75,-.19){$\scriptscriptstyle a_1-b_1$};	
		\end{tikzpicture}
	\end{center}
\end{remark}

\begin{notation} \label{notation: higher valence}
	We will utilize the following diagrammatic simplification
	\begin{center}
		\boxed{\begin{tikzpicture}[scale=.4]
			\draw (6,2)--(7,1)--(7,0);
			\draw (8,2)--(7,1);
			\node at (6,2.5){$\scriptstyle 1$};
			\node at (7,2.5){$\scriptstyle \dots$};
			\node at (8,2.5){$\scriptstyle n$};
			
			\draw (11,.5)--(12,1.5)--(12,2.5);
			\draw (13,.5)--(12,1.5);
			\node at (11,0){$\scriptstyle 1$};
			\node at (12,0){$\scriptstyle \dots$};
			\node at (13,0){$\scriptstyle n$};
			\end{tikzpicture}}
	\end{center}
	to represent labeled directed graphs resulting from iterated grafting of the product and coproduct in the left comb order
	\begin{center}
		\boxed{\begin{tikzpicture}[scale=.35]		
			\node at (-2.15,-.25){};
			\node at (-2.15,3.25){};
			
			\node at (-2,3.5){$\scriptstyle 1$};
			\node at (-1,3.5){$\scriptstyle 2$};
			\node at (0,3.5){$\scriptstyle 3$};
			\node at (2,3.5){$\scriptstyle n$};
			
			\draw (-2,3)--(0,1);
			\draw (2,3)--(0,1)--(0,0);
			\draw (0,3)--(-1,2);
			\draw (-1,3)--(-1.5,2.5);
			\draw (.2,2.3) node[scale= 0.5] {$\ddots$};
			\end{tikzpicture}
			\qquad 
			\begin{tikzpicture}[scale=.35]	
			
			\node at (-2,-3.5){$\scriptstyle 1$};
			\node at (-1,-3.5){$\scriptstyle 2$};
			\node at (0,-3.5){$\scriptstyle 3$};
			\node at (2,-3.5){$\scriptstyle n$};
			
			\draw (-2,-3)--(0,-1);
			\draw (2,-3)--(0,-1)--(0,0);
			\draw (0,-3)--(-1,-2);
			\draw (-1,-3)--(-1.5,-2.5);
			\draw (.2,-2.3) node[scale= 0.5, rotate = 75] {$\ddots$};
			\end{tikzpicture}}
	\end{center}
\end{notation}	

\begin{definition} \label{definition: surjection-like element}
	A canonical $(n,m)$-graph is an $(n,m)$-graph of the form 
	\begin{center}
		\boxed{\begin{tikzpicture}[scale=.4]
			\node at (3,8){$\scriptstyle 1$};
			\draw (2,5.5)--(3,6.5)--(3,7.5);
			\draw (4,5.5)--(3,6.5);
			\node at (2,5){$\scriptstyle 1$};
			\node at (3,5){$\scriptstyle \dots$};
			\node at (4,5){$\scriptstyle r_1$};
			
			\node at (5,6.5){$\cdots$};
			
			\node at (7,8){$\scriptstyle n$};
			\draw (6,5.5)--(7,6.5)--(7,7.5);
			\draw (8,5.5)--(7,6.5);
			\node at (6,5){$\scriptstyle 1$};
			\node at (7,5){$\scriptstyle \dots$};
			\node at (8,5){$\scriptstyle r_n$};
			
			\node at (5,4){$\vdots$};
			
			\node at (3,-.5){$\scriptstyle 1$};
			\draw (2,2)--(3,1)--(3,0);
			\draw (4,2)--(3,1);
			\node at (2,2.5){$\scriptstyle 1$};
			\node at (3,2.5){$\scriptstyle \dots$};
			\node at (4,2.5){$\scriptstyle k_1$};
			
			\node at (5,1){$\cdots$};
			
			\node at (7,-.5){$\scriptstyle m$};
			\draw (6,2)--(7,1)--(7,0);
			\draw (8,2)--(7,1);
			\node at (6,2.5){$\scriptstyle 1$};
			\node at (7,2.5){$\scriptstyle \dots$};
			\node at (8,2.5){$\scriptstyle k_m$};
			\end{tikzpicture}}
	\end{center}
	containing no internal vertices or copies of either\, \counit\ \,or \involution\ and such that for each $i = 1, \dots, m$ the induced map 
	\begin{equation*}
	\{1,\dots,k_i\} \to \bigsqcup\, \{1,\dots,r_1\} <  \cdots < \{1,\dots,r_n\}
	\end{equation*}
	is order preserving.
\end{definition}

\begin{definition}
	Given an element $\gamma$ in $\MS(n,m)$, thought of as an equivalence of weighted $(n,m)$-graphs, we say that an $(n,m)$-graph $\Gamma$ \textbf{supports} it if $\Gamma$ is equal to a representative of $\gamma$ after forgetting its weights.
\end{definition}

\begin{lemma} \label{lemma: unique surjection-like representative}
	For every element in $\MS(n,m)$ with $m>0$ there exists a unique canonical graph supporting it.
\end{lemma}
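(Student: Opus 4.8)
The plan is to exhibit a bijection between the set of canonical $(n,m)$-graphs and the cells of $\MS(n,m)$, establishing separately \emph{existence} (every element is supported on some canonical graph) and \emph{uniqueness} (no two distinct canonical graphs support a common element). Existence is the surjectivity of this correspondence and uniqueness its injectivity.

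For existence, I would reduce an arbitrary $(n,m)$-graph to canonical form by a terminating sequence of moves drawn from the defining relations. First, using the associativity and coassociativity relations of Definition \ref{definition: MS} together with Notation \ref{notation: higher valence}, I normalize every maximal product-subtree and coproduct-subtree into left-comb form. The key structural step is then to push all coproducts above all products: whenever a product is immediately followed by a coproduct (an \involution\ or, more generally, a $\Delta\circ\mu$ configuration), I apply the Leibniz relation, whose right-hand sides (see Remark \ref{remark: composition in terms of cutting squares}) replace product-above-coproduct by coproduct-above-product. A complexity measure counting product--coproduct adjacencies shows this straightening terminates, leaving a graph consisting of coproduct-combs emanating from the inputs, a bijection of intermediate strands, and product-combs feeding the outputs, with no internal vertices. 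Next I remove all \counit\ and \involution\ decorations: a coproduct with a counit on one branch collapses to an identity strand, a product followed by a counit splits into two counits, and an \involution\ bigon is erased by the relation $\mu\circ\Delta=\id$; since $m>0$ at least one output survives these collapses, so the procedure terminates with a counit-free, involution-free graph. Finally, the commutativity relation lets me sort the strands feeding each output comb into the order required by Definition \ref{definition: surjection-like element}, making the induced map on each output order preserving. This produces a canonical representative.

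For uniqueness, I would extract a complete combinatorial invariant of the supporting graph from the cellular bialgebra action on standard simplices. By Lemma \ref{lemma: action on standard simplices} and the cell-level formulas of Remark \ref{remark: explicit action of cellular generators on standard simplices}, the coproduct acts by the Alexander--Whitney splitting and the product by the join; one checks directly from these formulas that the associative, coassociative, commutative, involutive, and Leibniz relations of Definition \ref{definition: MS} all hold on each $\bm\Delta^d$, so the action descends to $\MS$. Evaluating a canonical graph on a tuple of top cells $[0,\dots,d_1],\dots,[0,\dots,d_n]$ of standard simplices and decomposing the resulting cell of $(\bm\Delta^{\,\bullet})^m$ via that Remark recovers, output by output, the integers $r_i$ and $k_j$ together with the order-preserving bijection between coproduct outputs and product inputs, that is, the entire combinatorial datum of Definition \ref{definition: surjection-like element}. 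Since this datum determines the canonical graph and the evaluation factors through $\MS$, two canonical graphs supporting the same element of $\MS(n,m)$ have identical data and hence coincide.

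The main obstacle is the uniqueness half, and within it the verification that the simplex invariant is both well defined on $\MS$ and complete. Well-definedness requires confirming that every $\MS$-relation holds for the join and Alexander--Whitney action, not merely the $\S$-relations already treated in Lemma \ref{lemma: action on standard simplices}; completeness requires arranging the input simplices so that the provenance of each output piece, namely which input block and which position within it, is unambiguously readable, which is what forces evaluation on an $n$-tuple of distinctly labeled simplices rather than a single one. By contrast the existence reduction is routine once the Leibniz straightening is seen to terminate.
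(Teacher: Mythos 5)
Your existence argument is essentially the paper's: normalize the product and coproduct combs with (co)associativity, push coproducts above products with the Leibniz relation, strip counits and involutions, and sort with commutativity; the paper performs the same moves in a slightly different order, and your adjacency-counting termination measure is a reasonable way to make the Leibniz straightening precise. The gap is in the uniqueness half. The invariant you propose --- evaluation of the bialgebra action of Lemma \ref{lemma: action on standard simplices} --- does not descend to $\MS$, so the well-definedness check that you yourself flag as the main obstacle in fact fails. Concretely, the involutive relation of Definition \ref{definition: MS} collapses the entire one-parameter weighted family \involution\ to the single strand \identity\ , but on $\bm{\Delta}^1$ the corresponding composite sends $x\le 1/2$ to $(1-s)\cdot 2x$ (or to $2sx$, depending on which output of $\Delta$ feeds which input of $\psi_s$), and this equals $x$ for all such $x$ only when $s=1/2$; taking $s=0$ and $x=1/4$ already gives $1/2\neq 1/4$. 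The Leibniz identification, which equates configurations carrying specific derived weights, likewise fails pointwise. Retreating to the cell-level formulas of Remark \ref{remark: explicit action of cellular generators on standard simplices} does not repair this: those formulas record only set-theoretic images of cells after taking the union over the parameter, so they forget exactly the edge-weight coordinates that distinguish points inside a cell of $\MS(n,m)$, and even as a cell-level invariant its well-definedness on the quotient $\MS$ (as opposed to on the free prop) would still have to be verified relation by relation, which is the very content your argument was meant to supply.

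The paper takes a different and more elementary route for uniqueness: it treats the reductions of the existence step as a rewriting system and checks that all critical monomials are confluent (in the Gr\"obner-basis sense), so that the canonical form is independent of the order in which the relations are applied. If you want to retain a representation-theoretic proof, you would need a target on which the $\MS$-relations hold on the nose together with their weights --- for instance the arc-surface model of Construction \ref{construction: arc surface} --- and you would then have to prove that the resulting action is faithful on canonical graphs, which is not easier than the confluence check.
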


\begin{proof}
	Consider an element in $\MS(n,m)$ and an $(n,m)$-graph supporting it. We start by getting rid of all occurrences of\, \counit\ . Consider one such strand and follow it up until hitting a vertex, which we must since $m > 0$. If the vertex we encounter is in a subgraph of the form
	\begin{tikzpicture}[scale=.25]
	\draw (0,0)--(0,-.7);
	\draw (0,0)--(.5,.5) ;
	\draw (0,0)--(-.5,.5);
	\draw [fill] (0,-.65) circle [radius=0.1];
	\end{tikzpicture}
	we can replace this with
	\begin{tikzpicture}[scale=.25]
	\draw (0,0)--(0,1.3);
	\draw [fill] (0,0) circle [radius=0.1];
	\draw (.7,0)--(.7,1.3);
	\draw [fill] (.7,0) circle [radius=0.1];
	\end{tikzpicture}
	and continue the excursion up along one of the strands. If alternatively we encounter a vertex contained in a subgraph of one of the following forms
	\begin{tikzpicture}[scale=.25]
	\draw (0,0)--(0,.8);
	\draw (0,0)--(.5,-.5);
	\draw (0,0)--(-.5,-.5);
	\draw [fill] (-.5,-.5) circle [radius=0.1];
	\end{tikzpicture}
	or
	\begin{tikzpicture}[scale=.25]
	\draw (0,0)--(0,.8);
	\draw (0,0)--(.5,-.5);
	\draw (0,0)--(-.5,-.5);
	\draw [fill] (.5,-.5) circle [radius=0.1];
	\end{tikzpicture} 
	we can replace this with\, 
	\begin{tikzpicture}[scale=.25]
	\draw (0,0)--(0,1.4);
	\end{tikzpicture}
	\,and choose another strand\ \counit \ to repeat the process.
	
	We have constructed a $(n,m)$-graph with no copies of\ \ \counit\, supporting our element. We now use the Leibniz relation to ensure that with respect to the direction of the $(n,m)$-graph all vertices belonging to a subgraph of the form\ \coproduct\ appear before vertices belonging to subgraphs of the form \nakedproduct. 
	
	Now we now use coassociativity and associativity to enforce the left comb convention. Using commutativity we reorder the strands of each iterated graftings of \nakedproduct\ so that the order preserving condition is satisfied. We then scan the supporting graph and replace each copy of \involution\ by a copy of \ \identity\, . This construction produces a canonical graph supporting our element. 
	
	In order to show the uniqueness of such canonical graph, we need to prove that the order in which we performed the replacements above does not matter, in the terminology of Gr\"obner bases \cite{dotsenko2010grobner, loday2012algebraic}, that all critical monomials are confluent. For example,
	\begin{center}
		\boxed{
			\begin{tikzpicture}[scale=.26]
			\draw (-1.5,14)--(-3.5,16)--(-3.5,17)--(-2.5,18);
			\draw (-3.5,14)--(-4.5,15)--(-3.5,16)--(-3.5,17)--(-4.5,18);						
			\draw (-5.5,14)--(-4.5,15);
			
			\draw (1.5,14)--(3.5,16)--(3.5,17)--(2.5,18);
			\draw (3.5,14)--(4.5,15)--(3.5,16)--(3.5,17)--(4.5,18);						
			\draw (5.5,14)--(4.5,15);
			
			\draw (-7,7)--(-7,9)--(-6,10)--(-6,11);
			\draw (-7,9)--(-8,10)--(-8,11);
			\draw (-8,7)--(-9,8)--(-9,9)--(-8,10);
			\draw (-10,7)--(-9,8);
			
			\draw (-12,11)--(-12,7); 
			\draw (-14,11)--(-14,8)--(-13,7);
			\draw (-14,8)--(-15,7);
			
			\draw (-17,7)--(-17,9)--(-18,10)--(-18,11);
			\draw (-18,7)--(-19,8)--(-19,9)--(-18,10);
			\draw (-20,7)--(-19,8);
			\draw (-19,9)--(-20,10)--(-20,11);
			
			\draw (-5,4)--(-5,1)--(-4,0);
			\draw (-5,1)--(-6,0);
			\draw (-7,4)--(-7,0);
			
			\draw (-9,0)--(-10,1)--(-10,2)--(-11,3)--(-11,4);
			\draw (-11,0)--(-10,1);
			\draw (-12,0)--(-12,2)--(-11,3);
			\draw (-12,2)--(-13,3)--(-13,4);
			
			\draw (-2,0)--(-2,2)--(-1,3)--(-1,4);
			\draw (0,0)--(0,2)--(-1,3);
			\draw (0,2)--(1,3)--(1,4);			
			\draw (2,0)--(2,2)--(1,3);
			
			\draw (5,4)--(5,1)--(4,0);
			\draw (5,1)--(6,0);
			\draw (7,4)--(7,0);
			
			\draw (9,0)--(10,1)--(10,2)--(11,3)--(11,4);
			\draw (11,0)--(10,1);
			\draw (12,0)--(12,2)--(11,3);
			\draw (12,2)--(13,3)--(13,4);
			
			\draw (7,7)--(7,9)--(6,10)--(6,11);
			\draw (7,9)--(8,10)--(8,11);
			\draw (8,7)--(9,8)--(9,9)--(8,10);
			\draw (10,7)--(9,8);
			
			\draw (12,11)--(12,7); 
			\draw (14,11)--(14,8)--(13,7);
			\draw (14,8)--(15,7);
			
			\draw (17,7)--(17,9)--(18,10)--(18,11);
			\draw (18,7)--(19,8)--(19,9)--(18,10);
			\draw (20,7)--(19,8);
			\draw (19,9)--(20,10)--(20,11);
			
			\draw[myptr] (8,13)--(9,12);	
			\draw[myptr] (9,6)--(8,5);	
			\draw[myptr] (-8,13)--(-9,12);	
			\draw[myptr] (-9,6)--(-8,5);
			\draw[myptr] (0,16.5)--(1,16.5);
			\draw[myptr] (0,16.5)--(-1,16.5);
			
			\node at (10.5,9.5)[scale=.8]{\Large or};
			\node at (15.5,9.5)[scale=.8]{\Large or};
			
			\node at (-10.5,9.5)[scale=.8]{\large or};
			\node at (-15.5,9.5)[scale=.8]{\large or};
			
			\node at (3.5,2.5)[scale=.8]{\large or};
			\node at (8.5,2.5)[scale=.8]{\large or};
			\node at (-8.5,2.5)[scale=.8]{\large or};
			\node at (-3.5,2.5)[scale=.8]{\large or};
			
			\node at (20.5,3){};
			\node at (-20.5,3){};
			\end{tikzpicture}
		}
	\end{center}
	and 
	\begin{center}
		\boxed{
			\begin{tikzpicture}[scale=.3]
			\draw (3,9)--(4,10)--(4,11)--(3,12)--(4,13)--(4,14);
			\draw (5,9)--(4,10)--(4,11)--(5,12)--(4,13);
			
			\draw (9,6)--(9,7)--(8,8)--(8,9)--(9,10)--(9,11);
			\draw (9,7)--(10,8)--(10,9)--(9,10);
			\draw (11,6)--(11,7)--(10,8);
			
			\draw (14,11)--(14,10)--(13,9)--(13,6);
			\draw (14,10)--(15,9)--(15,6);
			
			\draw (17,6)--(17,7)--(18,8)--(18,9)--(19,10)--(19,11);
			\draw (19,6)--(19,7)--(18,8);
			\draw (19,7)--(20,8)--(20,9)--(19,10);
			
			\draw (-2,0)--(-2,1)--(-3,2)--(-2,3)--(-1,4)--(-1,5);
			\draw (-2,1)--(-1,2)--(-2,3);
			\draw (0,0)--(0,3)--(-1,4);
			
			\draw (4,5)--(4,4)--(3,3)--(3,0);
			\draw (4,4)--(5,3)--(5,0);
			
			\draw (8,0)--(8,3)--(9,4)--(9,5);
			\draw (10,0)--(10,1)--(9,2)--(10,3)--(9,4);
			\draw (10,1)--(11,2)--(10,3);
			
			\draw (-1,10)--(-1,9)--(-2,8)--(-2,7);
			\draw (-1,9)--(0,8)--(0,7);
			
			\draw[myptr] (2,6)--(1,7);	
			\draw[myptr] (7,7)--(6,6);	
			\draw[myptr] (6,11)--(7,10);	
			\draw[myptr] (2,11)--(1,10);
			
			\node at (11.7,8.5)[scale=.8]{\large or};
			\node at (16.3,8.5)[scale=.8]{\large or};
			\node at (6.5,3)[scale=.8]{\large or};
			\node at (1.5,3)[scale=.8]{\large or};
			
			\node at (-3,3){};
			\node at (20,3){};
			\end{tikzpicture}		
		}
	\end{center}	
	The other compositions are verified similarly.
\end{proof}

\subsection{Surface realization of $\MS$} 

For any element in $\MS(n,m)$ with $m>0$ we faithfully associate an oriented surface equipped with a CW-structure having a weighted 1-skeleton. 

\begin{construction} \label{construction: arc surface}
	Consider an element in $\MS(n,m)$ with $m>0$ and the canonical $(n,m)$-graph supporting it as constructed in Lemma \ref{lemma: unique surjection-like representative}. By compactifying the open edges of the graph, we introduce $n+m$ new vertices. We glue to each of them both endpoints of an interval and call the resulting $n+m$ circles the incoming and outgoing boundary circles depending on the direction of the $(n,m)$-graph. We make this graph into a ribbon graph, i.e. give each vertex a cyclic order of its incident edges as follows: For the new $n+m$ vertices choose any cyclic order and for all others chose the natural extension of the total order induced from the labeling.	
	
	Consider the surface associated to this ribbon graph.\footnote{The surface associated to a ribbon graph is constructed by attaching a disk to each ribbon loop. A ribbon loop can be described as follows: Choose an edge of the ribbon graph and a direction for that edge. Select from the edges incident to the forward vertex $v$ the one that follows directly after our original edge in the cyclic order associated to $v$. We provide this second edge with the direction that has $v$ as its backward vertex and repeat this process until returning to our original edge in the original direction.} Remove from it the disks attached to the boundary circles and collapse edges satisfying the following conditions: 1) one and only one of their endpoint is in a boundary circle and 2) no other edge incident to their interior endpoint has the same relative direction (towards or away from the vertex).
	
	We refer to the directed and weighted 1-cells of the resulting CW-surface as \textbf{arcs} and notice the original element in $\MS$ can be recovered from them. \vspace*{-8pt}
\end{construction}

\begin{figure} 
	\begin{equation*}
	\boxed{
		\begin{tikzpicture}[scale=.8]
		\draw[thick] (1,5)--(1,2)--(2,1)--(2,0);
		\draw[thick] (1,4)--(-1,2)--(0,1)--(0,0);
		\draw[thick] (1,4)--(2,3)--(1.3,2.3);
		\draw[thick] (.7,1.7)--(0,1);
		
		\draw[|->] (3.2,2.5) to (4,2.5);
		\end{tikzpicture}
		\quad
		\begin{tikzpicture}[scale=.7]
		\draw (0,.5) arc (0:180: 1  and 0.5);
		\draw (0,.5) arc (0:-180: 1 and 0.5);
		\draw[dashed,color=gray] (-2,-4) arc (0:180: 1  and 0.5);
		\draw (-2,-4) arc (0:-180: 1 and 0.5);
		\draw[dashed,color=gray] (2,-4) arc (0:180: 1  and 0.5);
		\draw (2,-4) arc (0:-180: 1 and 0.5);
		\filldraw (1,-4.5) circle (2pt);
		\filldraw (-3,-4.5) circle (2pt);
		\filldraw (-1,0) circle (2pt);
		
		\filldraw (-1,-.7) circle (2pt);
		\draw [thick] (-1,-.7)--(-1,0);
		
		\draw  (-2,.5) to [out=-90, in=90] (-4,-4);
		\draw  (0,.5) to [out=-90, in=90] (2,-4);
		\draw  (-2,-4) to [out=90, in=90] (0,-4);
		
		\draw [thick] (-1,-.7) to [out=-120, in=90] (-3,-4.5);
		\draw [thick] (-1,-.7) to [out=-90, in=90] (1,-4.5);
		\draw [thick] (-1,-.7) to [out=-60, in=190] (.9,-1.6);
		\draw [thick, dashed] (.9,-1.6) to [out=-90, in=20] (-1,-3.42);
		\draw [thick] (-2.5,-2.8) to [out=60, in=170] (-1.15,-3.41);
		
		\filldraw (-2.43,-2.7) circle (2pt);
		
		\draw[|->] (2.5,-1.7) to (3.3,-1.7);
		\end{tikzpicture}
		\quad 
		\begin{tikzpicture}[scale=.7]
		\draw (0,.5) arc (0:180: 1  and 0.5);
		\draw (0,.5) arc (0:-180: 1 and 0.5);
		\draw[dashed,color=gray] (-2,-4) arc (0:180: 1  and 0.5);
		\draw (-2,-4) arc (0:-180: 1 and 0.5);
		\draw[dashed,color=gray] (2,-4) arc (0:180: 1  and 0.5);
		\draw (2,-4) arc (0:-180: 1 and 0.5);
		\filldraw (1,-4.5) circle (2pt);
		\filldraw (-3,-4.5) circle (2pt);
		\filldraw (-1,0) circle (2pt);
		
		\draw  (-2,.5) to [out=-90, in=90] (-4,-4);
		\draw  (0,.5) to [out=-90, in=90] (2,-4);
		\draw  (-2,-4) to [out=90, in=90] (0,-4);
		
		\draw [thick] (-1,0) to [out=-120, in=110] (-3,-4.5);
		\draw [thick] (-1,0) to [out=-90, in=90] (1,-4.5);
		\draw [thick] (-1,0) to [out=-60, in=190] (.9,-1.6);
		\draw [thick, dashed] (.9,-1.6) to [out=-90, in=20] (-1,-3.42);
		\draw [thick] (-1.8,-3) to [out=0, in=170] (-1.15,-3.41);
		\draw [thick] (-1.8,-3) to [out=180, in=40] (-3,-4.5);
		\end{tikzpicture}	
	}
	\end{equation*}
	\caption{Illustrating Construction \ref{construction: arc surface} with omitted weights and labelings}
\end{figure}

\begin{definition}
	The prop $\mathcal{A}$ is defined by pushing forward the CW and prop structures from $\MS$ to the image of Construction \ref{construction: arc surface}.
\end{definition}

\begin{remark} \label{remark: what happends when the weight goes to 0}
	Notice that $\mathcal A(n,0) = \emptyset$ for every $n$. Also, for a family of element in $\mathcal A$ parametrized by the weight of an arc tending to zero, we see that the limit will remove the arc and the topology of the surface will possibly change.
\end{remark}

\subsection{Relations to earlier work}

The reader familiar with \cite{kaufmann03arc} will recognize the elements of $\mathcal A$ as examples of Arc Surfaces. We make the connection more precise with the following

\begin{proposition} \label{theorem: StLGThree and K are isomorphic}
	The operad $U(\mathcal{A})$ is isomorphic to $\mathcal{S}t\mathcal{LGT}ree^1(m)$ as defined by Kaufmann in \cite{kaufmann09dimension}.
\end{proposition}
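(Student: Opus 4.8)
The plan is to exhibit an explicit isomorphism of $\Sigma$-modules $\Theta = \{\Theta_m\}$ between $U(\mathcal{A}) = \{\mathcal{A}(1,m)\}$ and $\mathcal{S}t\mathcal{LGT}ree^1$, and then verify that it intertwines the operadic structure maps. I would begin by unpacking Kaufmann's definition from \cite{kaufmann09dimension}: the elements of $\mathcal{S}t\mathcal{LGT}ree^1(m)$ are arc families on a surface with one distinguished (input) boundary component and $m$ labeled (output) boundary components, of the prescribed topological type, whose arcs carry weights normalized in the stabilized fashion and satisfy the linearity and tree conditions. The first task is to read off from this a purely combinatorial parametrization, separating the discrete data (how the arcs emanating from the distinguished boundary are distributed and cyclically ordered among the output boundaries) from the continuous weight data.

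Next I would analyze the output of Construction \ref{construction: arc surface} on an element of $\mathcal{A}(1,m) = U(\mathcal{A})(m)$. By Lemma \ref{lemma: unique surjection-like representative}, with $n=1$, each such element is represented by a unique canonical $(1,m)$-graph, and I would check that the associated CW-surface has exactly one incoming and $m$ outgoing boundary circles and the topological type appearing in $\mathcal{S}t\mathcal{LGT}ree^1$. The directed, weighted $1$-cells of this surface then realize an arc family of Kaufmann's type: the order-preserving assignment recorded in the canonical graph encodes the combinatorics of the arc family, while the edge-weight coordinates introduced before Definition \ref{definition: MS} encode the arc weights. This assignment is the candidate map $\Theta_m$, and its well-definedness rests on the uniqueness half of Lemma \ref{lemma: unique surjection-like representative} together with the faithfulness of the construction noted in its concluding remark.

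I would then verify that each $\Theta_m$ is a homeomorphism and that $\Theta$ is $\Sigma_m$-equivariant, relabeling the $m$ outputs of the canonical graph corresponding to relabeling the $m$ output boundary circles. Bijectivity on underlying sets follows from uniqueness of canonical representatives and recoverability of the element from its arcs; continuity in both directions follows because the only continuous parameters on either side are the weights, and the two weight systems are related by the piecewise-linear change of coordinates recorded before Definition \ref{definition: MS}. This establishes that $\Theta$ is an isomorphism of $\Sigma$-modules.

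The crux, which I expect to be the main obstacle, is showing that $\Theta$ respects operadic composition: on the $U(\mathcal{A})$ side this is the vertical composition of $\MS$ pushed forward through Construction \ref{construction: arc surface}, while on Kaufmann's side it is arc gluing along the distinguished boundary circle. I would reduce this to a local statement: inserting one canonical graph along an output strand of another and returning the result to canonical form using the involutive, (co)associative, commutative, and Leibniz relations of $\MS$ reproduces Kaufmann's glued arc family weight for weight. The delicate point is the weight bookkeeping, since gluing subdivides arcs according to how the weighted $1$-skeleta meet at the common boundary, and this subdivision must be matched against the rescaling dictated by the edge-weight coordinate change, with attention to the degenerations flagged in Remark \ref{remark: what happends when the weight goes to 0}. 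Once this local compatibility is in place, equivariance of composition and unitality follow formally from the corresponding properties of $\MS$, completing the identification $U(\mathcal{A}) \cong \mathcal{S}t\mathcal{LGT}ree^1$.
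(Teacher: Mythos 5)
Your overall strategy matches the paper's: identify elements on both sides via the canonical representatives of Lemma \ref{lemma: unique surjection-like representative} and the arcs of Construction \ref{construction: arc surface}, check that the weight coordinates make this a cellular homeomorphism compatible with the symmetric group actions, and then verify compatibility with composition. The differences are in how the two halves are executed. For the bijection, the paper does not build a direct map to $\mathcal{S}t\mathcal{LGT}ree^1(m)$; it first identifies $U(\mathcal{A})(m)$ with the \emph{quasi-filling} elements of Kaufmann's unstabilized space $\mathcal{LGT}ree^1(m)$ and then invokes Kaufmann's own Corollary 2.2.7, which says that every stabilized element has a unique quasi-filling representative, together with a comparison of his stabilization process with Remark \ref{remark: what happends when the weight goes to 0}. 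This outsources exactly the normalization bookkeeping you propose to do by hand. For the composition, the paper's key device is Remark \ref{remark: composition in terms of cutting squares}: vertical composition in $\MS$ (hence in $\mathcal{A}$) is described globally as cutting a rectangle along vertical segments and deformation retracting, which is literally Kaufmann's definition of composition in the Arc Surface props, so the match is read off rather than verified relation by relation. Your plan of inserting one canonical graph into another and renormalizing using the involutive, (co)associative, commutative, and Leibniz relations would work in principle, but it is substantially more laborious and is precisely the computation the rectangle-cutting picture is designed to package; if you pursue your route, the weight bookkeeping you flag as delicate is indeed where the effort concentrates, and you should expect to reprove in effect both the confluence argument of Lemma \ref{lemma: unique surjection-like representative} and the content of Kaufmann's stabilization lemma along the way.
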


\begin{proof}
	Comparing with Definition 2.4 in \cite{kaufmann09dimension} and section 4.1 in the same reference, we notice that any element in $U(\mathcal{A})(m)$ corresponds a to quasi-filling element in $\mathcal{LGT}ree(m)^1$ and that any such element arises this way. In the same reference, Corollary 2.2.7 states that any element in $\mathcal{S}t\mathcal{LGT}ree(m)^1$ corresponds to a unique quasi-filling element in $\mathcal{LGT}ree(m)^1$, so we have a bijection between $\mathcal A(m)$ and $\mathcal{S}t\mathcal{LGT}ree(m)^1$. 
	
	Comparing the stabilization process introduced in Definition 2.24 of \cite{kaufmann09dimension} with Remark \ref{remark: what happends when the weight goes to 0} makes this bijection into a cellular isomorphism.
	
	We can use Remark \ref{remark: composition in terms of cutting squares} to describe the composition in $U(\MS)$ and $\mathcal A$ in terms of vertically invariant deformation retractions of cut rectangles:
	\begin{center}
		\begin{tikzpicture}[scale=.5]
		\draw (0,0) rectangle (10,3); 
		
		\node at (1.5,3.2){$\scriptstyle a_1 \cdot b$};
		\node at (4.8,3.2){$\scriptstyle a_2 \cdot b$};
		\draw[dashed] (3,3)--(3,1);
		
		\node at (1,-.3){$\scriptstyle b_1$};
		\node at (4,-.3){$\scriptstyle b_2$};
		\draw[dashed] (2,0)--(2,2);
		
		\draw[dashed] (6,0)--(6,2);
		\node at (9,3.2){$\scriptstyle a_p \cdot b$};
		\draw[dashed] (8,3)--(8,1);
		\node at (8.5,-.3){$\scriptstyle b_q$};
		
		\node at (7,-.4){$\cdots$};
		\node at (7,1.5){$\cdots$};
		\node at (7,3.2){$\cdots$};
		\end{tikzpicture}
	\end{center}  
	where $a_1 + \cdots + a_q = 1$ and $b_1 + \cdots + b_q = b$. This allows us to recognize the vertical composition in $\mathcal A$ as that of the Arc Surface props \cite{kaufmann03arc}. See for example Section 1.2.2. in \cite{kaufmann09dimension} for the definition of this composition. In particular, this shows the correspondence of the operadic compositions of $U(\mathcal A)$ and $\big\{ \mathcal{S}t\mathcal{LGT}ree^1(m) \big\}_{m \geq 1}$.
\end{proof}
The prop $\chains(\MS)$ in the category of differential graded modules, resulting from applying the functor of cellular chains to the prop $\MS$, inherits a finite presentation with generators
\begin{equation*}
\counit \in \mathrm C_0(\MS)(1,0) \qquad \coproduct \in \mathrm C_0(\MS)(1,2) \qquad \nakedproduct \in \mathrm C_1(\MS)(2,1)
\end{equation*} 
differential 
\begin{equation*}
\partial\ \counit = 0 \qquad \partial\ \coproduct = 0 \qquad \partial\ \nakedproduct =\ 
\begin{tikzpicture}[scale=.22]
\draw (0,0)--(0,1.3);
\draw (.5,0)--(.5,1.3);
\draw [fill] (0,0) circle [radius=0.1];
\draw (.9,.6)--(1.3,.6);
\draw (1.7,0)--(1.7,1.3);
\draw (2.2,0)--(2.2,1.3);
\draw [fill] (2.2,0) circle [radius=0.1];
\end{tikzpicture}
\end{equation*}
and relations
\begin{equation*}
\begin{tikzpicture}[scale=.15]
\draw (1,4)--(1,3)--(0,2)--(0,0);
\draw (1,3)--(2,2)--(2,0);

\draw node[fill,circle, scale=.2] at (0,.1) {};

\draw node[scale=.8] at (3.5,2) {$\scriptstyle -$};

\draw (5,4)--(5,0);

\node at (-.2,4.2){};
\node at(2.2,-.2){};
\node at (7.2,2){;};
\end{tikzpicture}
\begin{tikzpicture}[scale=.15]
\draw (1,4)--(1,3)--(0,2)--(0,0);
\draw (1,3)--(2,2)--(2,0);

\draw node[fill,circle, scale=.2] at (2,.1) {};

\draw node[scale=.8] at (3.5,2) {$\scriptstyle -$};

\draw (5,4)--(5,0);

\node at (-.1,4.2){};
\node at(2.2,-.2){};
\node at (7,2){;\ };
\end{tikzpicture}
\begin{tikzpicture}[scale=.15]
\draw (0,4)--(0,3)--(1,2)--(1,0);
\draw (2,4)--(2,3)--(1,2);

\draw node[fill,circle, scale=.2] at (1,.1) {};

\node at (-.1,4.2){};
\node at(2.2,-.2){};
\node at (4,2){;};
\end{tikzpicture}
\begin{tikzpicture}[scale=.15]
\node at (4.8,4.2){};
\node at (4.8,-.2){};

\draw node[scale=.8] at (5,4){$\scriptstyle 1$};
\draw node[scale=.8] at (7,4){$\scriptstyle 2$};
\draw (5,3)--(5,2)--(6,1)--(6,0);
\draw (7,3)--(7,2)--(6,1)--(6,0);

\draw node[scale=.8] at (8,2){$\scriptstyle -$};

\draw node[scale=.8] at (9,4){$\scriptstyle 2$};
\draw node[scale=.8] at (11,4){$\scriptstyle 1$};
\draw (9,3)--(9,2)--(10,1)--(10,0);
\draw (11,3)--(11,2)--(10,1)--(10,0);

\node at (13,2){;};
\end{tikzpicture} 
\begin{tikzpicture}[scale=.15]
\node at (4.8,4.2){};
\node at (4.8,-.2){};

\draw (7,4)--(7,3)--(6,2);
\draw (5,4)--(5,3)--(7,1)--(7,0);
\draw (8,4)--(8,2)--(7,1)--(7,0);

\draw node[scale=.8] at (9,2){$\scriptstyle -$};

\draw (10,4)--(10,2)--(11,1)--(11,0);
\draw (13,4)--(13,3)--(11,1);
\draw (11,4)--(11,3)--(12,2);

\node at (15,2){;};
\end{tikzpicture} 
\begin{tikzpicture}[scale=.15]
\node at (4.8,-4.2){};
\node at (4.8,.2){};

\draw (7,-4)--(7,-3)--(6,-2);
\draw (5,-4)--(5,-3)--(7,-1)--(7,0);
\draw (8,-4)--(8,-2)--(7,-1)--(7,0);

\draw node[scale=.8] at (9,-2){$\scriptstyle -$};

\draw (10,-4)--(10,-2)--(11,-1)--(11,0);
\draw (13,-4)--(13,-3)--(11,-1);
\draw (11,-4)--(11,-3)--(12,-2);

\node at (15,-2){;};
\end{tikzpicture}\
\begin{tikzpicture}[scale=.15]
\draw (0,4)--(1,3)--(1,1)--(0,0);
\draw (2,4)--(1,3);
\draw (1,1)--(2,0);

\draw node[scale=.8] at (2.7,2){$\scriptstyle -$};

\draw (5,4)--(5,3)--(4,2)--(4,0);
\draw (5,3)--(7,1)--(7,0);
\draw (8,4)--(8,2)--(7,1)--(7,0);

\draw node[scale=.8] at (9,2){$\scriptstyle -$};

\draw (10,4)--(10,2)--(11,1)--(11,0);
\draw (13,4)--(13,3)--(11,1);
\draw (13,3)--(14,2)--(14,0);

\node at (4.8,4.2){};
\node at (14.3,-.2){};
\end{tikzpicture}
\begin{tikzpicture}[scale=.15]
\node at (-1.5,2){;};
\draw (1,4)--(1,3)--(0,2)--(1,1)--(1,0);
\draw (1,3)--(2,2)--(1,1);

\node at (2.2,4.2){};
\node at(2.2,-.2){.};
\end{tikzpicture}
\end{equation*}	

Lemma 17 of \cite{medina2020prop1} shows that the operad associated to $\chains(\MS)$ is isomorphic up to signs to the Surjection operad \cite{mcclure2003multivariable, berger2004combinatorial}. We therefore have the following corollary to Proposition \ref{theorem: StLGThree and K are isomorphic} which was conjectured by Kaufmann in 4.4 of \cite{kaufmann09dimension}.

\begin{corollary}
	The operad obtained by applying the cellular chains to $\mathcal{S}t\mathcal{LGT}ree^1$ is up to signs isomorphic to the Surjection operad.
\end{corollary}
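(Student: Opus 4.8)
The plan is to assemble the corollary from the isomorphisms already in hand, together with the functoriality of cellular chains. First I would record that $\chains$ is a symmetric monoidal functor from $(\CW,\times,\1)$ to differential graded modules (via the cellular cross product, i.e.\ Eilenberg--Zilber), so it carries props to algebraic props and operads to dg-operads, and that it commutes with the forgetful functor $U$: both $U\circ\chains$ and $\chains\circ U$ send a prop $\P$ to the dg-operad with components $\chains(\P(1,m))$. In particular a \emph{cellular} isomorphism of operads is sent by $\chains$ to a strict isomorphism of dg-operads, since the structure maps are cellular and the cellular cross product is natural.

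Next I would apply this to Proposition~\ref{theorem: StLGThree and K are isomorphic}, whose isomorphism $U(\mathcal{A})\cong\mathcal{S}t\mathcal{LGT}ree^1$ is cellular. Applying $\chains$ produces a dg-operad isomorphism
\begin{equation*}
\chains\big(\mathcal{S}t\mathcal{LGT}ree^1\big)\;\cong\;\chains\big(U(\mathcal{A})\big)\;=\;U\big(\chains(\mathcal{A})\big).
\end{equation*}
Since $\mathcal{A}$ is defined by pushing forward the CW and prop structures of $\MS$ along the faithful Construction~\ref{construction: arc surface}, in every biarity $(n,m)$ with $m>0$ the canonical map $\MS(n,m)\to\mathcal{A}(n,m)$ is a cellular isomorphism compatible with the prop structure. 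Restricting to the components $(1,m)$ with $m\geq 1$ and applying $\chains$ therefore identifies $U(\chains(\mathcal{A}))$ with $U(\chains(\MS))$, the operad associated to the algebraic prop $\chains(\MS)$.

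Finally I would invoke Lemma~17 of \cite{medina2018algebraic}, which states that the operad associated to $\chains(\MS)$ is isomorphic up to signs to the Surjection Operad \cite{mcclure2003multivariable, berger2004combinatorial}. Composing the three identifications yields the asserted isomorphism, up to signs, between $\chains(\mathcal{S}t\mathcal{LGT}ree^1)$ and the Surjection Operad. The one point requiring care is the arity-$0$ discrepancy: by Remark~\ref{remark: what happends when the weight goes to 0} one has $\mathcal{A}(n,0)=\emptyset$, whereas $\MS(n,0)$ is a point, so the clean identification $\mathcal{A}(n,m)\cong\MS(n,m)$ is available only for $m>0$. This is harmless, as $\mathcal{S}t\mathcal{LGT}ree^1$ is defined only in arities $m\geq 1$. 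Beyond this, the sole nontrivial bookkeeping is the sign convention, and every sign is imported verbatim from Lemma~17, since the isomorphisms produced in the first two steps are strict.
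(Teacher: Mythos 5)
Your proposal is correct and follows essentially the same route as the paper, which derives the corollary by combining the cellular isomorphism $U(\mathcal{A})\cong\mathcal{S}t\mathcal{LGT}ree^1$ of Proposition \ref{theorem: StLGThree and K are isomorphic} with Lemma 17 of \cite{medina2018algebraic} identifying the operad associated to $\chains(\MS)$ with the Surjection Operad up to signs. You merely spell out the intermediate functoriality steps (symmetric monoidality of $\chains$, commutation with $U$, the $\MS\cong\mathcal{A}$ identification in positive output arity) that the paper leaves implicit.
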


\begin{remark}
	For the $E_2$-suboperad, this was independently established in \cite{kaufmann2017permutahedra}.
\end{remark}

Let us now return to $E_\infty$-structures on simplicial sets. Consider the $\tilde{\S}$-bialgebra structure on the standard simplices as described in Lemma \ref{lemma: action on standard simplices}. Applying the functor of cellular chains, we obtain a natural prop morphism 
\begin{equation} \label{equation: map from chains on first prop to endomorphism of standard simplices}
\chains(\tilde{\S}) \to \End(\chains(\Delta^d))
\end{equation}
where $\End(\chains(\Delta^d))(n,m) = \Hom(\chains(\Delta^d)^{\tensor n}, \chains(\Delta^d)^{\tensor m})$.

As can be seen from Remark  \ref{remark: explicit action of cellular generators on standard simplices}, this map sends the generators of $\chains(\tilde{\S})$ to the following functions, which we describe up to signs:	
\begin{equation} \label{equation: action of chains on S tilde}
\begin{split}
&\coproduct\ [v_0,\dots,v_q] = \sum_{i=0}^q[v_0,\dots,v_i]\otimes[v_i,\dots,v_q], \\
&\ \counit\ [v_0,\dots,v_q] = \begin{cases} 1 & \text{ if } q=0 \\ 0 & \text{ if } q>0,\end{cases} \\
&\nakedproduct\ \big( \left[v_0,\dots,v_p \right] \otimes\left[v_{p+1},\dots,v_q\right] \big) = \begin{cases} 
\left[v_{\pi(0)},\dots,v_{\pi(q)}\right] & \text{ if } i\neq j \text{ implies } v_i\neq v_j \\ \hspace*{1.15cm}
0 & \text{ if not,} \end{cases} \\
&\ \nakedleftcounithomotopy\, [v_0,\dots,v_q] = 0.
\end{split}
\end{equation}

The fact that $\nakedleftcounithomotopy$\ acts trivially serves as a motivation for considering the cellular chains on $\S$ as introduced in Definition \ref{Prop Su}. We have an algebraic presentation of it with generators 
$$\counit \in \chains(\S)(1,0)_0 \hspace*{.6cm} \coproduct \in \chains(\S)(1,2)_0 \hspace*{.6cm} \nakedproduct \in \chains(\S)(2,1)_1$$ 
differential $$\partial\ \counit=0\hspace*{.6cm}\partial\ \coproduct=0\hspace*{.6cm}\partial\ \nakedproduct=\ \boundary$$
and relations $$\productcounit\hspace*{.6cm}\leftcounitality\hspace*{.6cm}\rightcounitality\,.$$ 
We can verify that these relations are satisfied by the assignments in (\ref{equation: action of chains on S tilde}), so we have a factorization
\begin{equation*}
\begin{tikzcd}
\chains(\tilde{\S}) \ar[r] \ar[d] &  \End(\chains(\Delta^d)). \\ \chains(\S) \ar[ur] &
\end{tikzcd}
\end{equation*}
The above morphism $\chains(\S) \to \End(\chains(\Delta^d))$ was introduced and studied in \cite{medina2020prop1} where it was related to the $E_\infty$-structure defined by McClure-Smith and Berger-Fresse \cite{mcclure2003multivariable,berger2004combinatorial}. 

\begin{remark}
	The restriction to biarity $(1,2)$ of this $E_\infty$-bialgebra induces Steenrod's cup-$i$ products as defined in \cite{steenrod47products}. In \cite{medina2018axiomatic}, an axiomatic characterization of these products was given, and in \cite{medina2020globular}, they were used to derive the nerve construction of higher categories as defined in \cite{street1987algebra}. In the present paper, we were able to obtain Steenrod's cup-$i$ products naturally from only four maps associated to the interval. We see this as further evidence of the fundamental nature of Steenrod's cup-$i$ products. See \cite{medina2018persistence} for algorithms using them to incorporate cohomology operations into topological data analysis \cite{carlsson2009data, medina2020giottotda}, and \cite{medina2020cartan, medina2020adem} for their use constructing cochains enforcing the Cartan and Adem relations at the cochain level. Higher arity products, analogous to the cup-i products, defining Steenrod operations at odd primes are defined in \cite{medina2020odd}.
\end{remark}

\bibliography{medina2018cellular}
\bibliographystyle{alpha}
	
\end{document}